\documentclass[12pt]{amsart}

\usepackage{fullpage}
\usepackage{amsmath}
\usepackage{amsfonts}
\usepackage{amssymb}
\usepackage{amsthm}
\usepackage[all,knot,poly]{xy}
\usepackage{xypic}
\usepackage{hyperref}
\hypersetup{
    colorlinks=true,
    linkcolor=blue,
    filecolor=magenta,      
    urlcolor=cyan,
}
 
\urlstyle{same}

\input xy
\xyoption{all}

\theoremstyle{plain}

\newtheorem{theorem}{Theorem}[section]
\newtheorem{lemma}[theorem]{Lemma}
\newtheorem{proposition}[theorem]{Proposition}

\newtheorem{remark}[theorem]{Remark}
\newtheorem{definition}[theorem]{Definition}
\newtheorem{example}[theorem]{Example}

\usepackage{color}

\title{{Bidirectional Sequential Motion Planning}}
\author{Enrique Torres-Giese}
\address{Trinity Western University, Langley BC, V2Y 1Y1 , Canada.}
\email{enrique.torresgiese@twu.ca}

\begin{document}
\begin{abstract} 
We define a simpler notion of symmetric topological complexity more ad hoc to the 
motion planning problem which was the original motivation for the definition of topological complexity. This is a homotopy invariant that we call bidirectional 
topological complexity. We prove properties of this invariant and show specific 
instances for which the symmetrized topological complexity can be relaxed to the
bidirectional setting. This approach allows us to estimate higher values of 
symmetrized topological complexities.  
\end{abstract}
\maketitle

\section{Introduction}
The topological complexity of a space $X$, denoted by $\mathrm{TC}_2(X)$, 
is a homotopy invariant that intends to measure the difficulty of 
the motion planning problem in the space $X$   (see~\cite{farber}).
More precisely, it is the smallest number of open subsets that cover $X\times X$ 
on each of which there is a section 
to the evaluation map $e\colon X^I \to X\times X$ given by $\alpha\mapsto (\alpha(0),\alpha(1))$. These local sections are called motion planners as they locally solve the motion
planning problem for the open neighborhoods on which they are defined. This concept 
can be extended by considering $n-2$ intermediate points $\frac{1}{n-1},\frac{2}{n-1},\ldots,\frac{n-2}{n-1}$ in the unit interval
$I$ and the multi-evaluation map $e\colon X^I \to X^n$ given by: 
\[ \alpha\mapsto \left(\alpha(0),\alpha\left(\frac{1}{n-1}\right),\ldots,\alpha\left(\frac{n-2}{n-1}\right),\alpha(1)\right). \] 
The resulting number is a homotopy invariant and is denoted by $\mathrm{TC}_n(X)$. 
Notice that a motion planner in this latter context finds a 
path in $X$ visiting a sequence of points in $X$ following a prescribed order, 
let us say $a_1$ first, $a_2$ next, and so on.    
Our aim is to consider only ``symmetric'' planners, or more precisely bidirectional planners. 
That is, planners such that the path they assign to 
$a_n,\dots,a_1$ (visited in this order) is the reverse of the 
path assigned to $a_1,\ldots,a_n$. 
This latter requirement may not be satisfied by the planners that define $\mathrm{TC}_n$; 
one simple reason is because their open sets need not be ``symmetric'' --- 
$(a_1,\ldots,a_n)$ and $(a_n,\ldots,a_1)$ may be in different domains. 
One way to alleviate this 
problem is to impose both the open sets and the planners be ``symmetric'' with respect to
the involution that reverses both the order of the components of an $n$-tuple in $X^n$ and the direction of a path in $X^I$. This leads us to define $\mathrm{TC}^\beta_n(X)$, the
bidirectional topological complexity of $X$ which we will show is a homotopy 
invariant of $X$. The ``symmetric'' condition for motion planners is also considered 
in a more elaborate way in the definition of ``symmetrized'' topological 
complexity $\mathrm{TC}^\Sigma_n(X)$ given in~\cite{bgrt}. These notions of 
topological complexity satisfy the inequalities
\[ \mathrm{TC}_n(X) \leq \mathrm{TC}^\beta_n(X) \leq \mathrm{TC}^\Sigma_n(X), \]
and moreover, $\mathrm{TC}^\beta_2(X) = \mathrm{TC}^\Sigma_2(X)$. 
The definition of $\mathrm{TC}^\beta_n$ should be thought of as the first natural simplification 
of $\mathrm{TC}^\Sigma_n$; or more precisely, as the first term of a sequence of ``symmetrized'' 
topological complexities interpolating between $\mathrm{TC}_n$ and $\mathrm{TC}^\Sigma_n$ (see Remark~\ref{last}).
The definition of bidirectional topological complexity is more relevant to the sequential motion
planning problem, it is also potentially easier to calculate, and could be used 
to understand and calculate some values of $\mathrm{TC}^\Sigma_n$. For instance, we prove the following:

\begin{theorem}\label{main_theorem}
\[ \mathrm{TC}^\beta_n(S^m) = \left\{
\begin{array}{cl}
n+1 & \text{ if } m \text{ is even},\\
n & \text{ if both } m,n \text{ are odd.}
\end{array}\right. \]
\end{theorem}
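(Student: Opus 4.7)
My plan is to prove matching lower and upper bounds for each case. The lower bounds are immediate from the inequality $\mathrm{TC}_n(X)\le\mathrm{TC}^\beta_n(X)$ stated in the introduction, combined with the classical computation (Rudyak; Farber--Tabachnikov--Yuzvinsky for $n=2$) that $\mathrm{TC}_n(S^m)$ equals $n+1$ when $m$ is even and $n$ when $m$ is odd. These yield exactly the required lower bounds, so it remains to exhibit covers realizing them.

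For the upper bounds I would exhibit explicit $\tau$-invariant open covers of $(S^m)^n$ that support bidirectional sections, where $\tau(a_1,\ldots,a_n)=(a_n,\ldots,a_1)$. The strategy is to stratify $(S^m)^n$ by a reversal-invariant function of the tuple---for instance the number of indices $i<n$ at which the transition $a_i\to a_{i+1}$ is ``degenerate'' (endpoints equal, and in the even case antipodal as well). This count is $\tau$-invariant, so the strata are symmetric. On each stratum I would build the planner by concatenating shortest geodesic arcs---which are automatically reverse-compatible---with detours at the degenerate transitions. In the $m$ even case this produces $n+1$ strata; in the $m$ odd case the non-vanishing vector field on $S^m$ lets me absorb one stratum into its neighbor, leaving $n$ strata. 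The hypothesis that $n$ is odd enters in the gluing: only then does the reversal involution on the index set $\{1,\ldots,n\}$ fix a middle coordinate, which is the combinatorial feature needed to choose the detours symmetrically across the central palindromic stratum.

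The main obstacle I anticipate is the construction of bidirectional detours at antipodal pairs. The naive rotation $\gamma_x(t)=e^{i\pi t}x$ is not reverse-compatible: its reverse $t\mapsto -e^{-i\pi t}x$ is not the designated detour $\gamma_{-x}(t)=-e^{i\pi t}x$ starting at $-x$. I would instead build the detour to factor through a continuous antipodally-invariant ``midpoint'' $m(x)$ satisfying $m(-x)=m(x)$, so that the concatenation $x\to m(x)\to -x$ reverses correctly to the designated detour $-x\to m(x)\to x$. Such a midpoint map descends through $\mathbb{R}P^m$ and cannot be chosen globally while avoiding $\pm x$, so the construction must be carried out stratum by stratum on symmetric open sets where the ambiguity can be resolved. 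Verifying that the local constructions glue coherently while respecting the reversal involution---in particular across the palindromic locus, where the bidirectional condition forces paths to be their own time-reversals---is the most delicate part of the plan, and is where I expect the parity hypothesis on $n$ to play its decisive role.
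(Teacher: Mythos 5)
Your lower bounds and overall shape of the argument (stratify by a reversal\mbox{-}invariant count of degenerate transitions, plan via geodesics plus vector\mbox{-}field detours, use the middle index when $n$ is odd) match the paper's outline, but the crux of the proof is resolved in a genuinely different way in the paper, and the resolution you propose has a gap that you yourself flag without closing.

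The key over-constraint in your plan is that you ask the antipodal detour to be \emph{pairwise} bidirectional: you want the path from $x$ to $-x$ to reverse to the designated path from $-x$ to $x$, and you correctly observe that forcing this via an antipodally invariant ``midpoint'' $m(x)$ with $m(-x)=m(x)$, $m(x)\neq\pm x$ is globally impossible. The paper's insight is that this pairwise condition is not needed. Bidirectionality is a condition on the entire concatenated $n$-point path, and under $\beta$ the segment in slot $i$ is paired with the segment in slot $n-i$, which (because $n=2l+1$ is odd) lies on the \emph{other side} of the middle index $l+1$. This frees you to make an index-dependent choice: at an antipodal pair in slot $i\le l$, travel along the great circle in the direction of $v(x_i)$; in slot $i\ge l+1$, take instead the reverse of the forward detour from $x_{i+1}$ in the direction $v(x_{i+1})$. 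The two detours in paired slots are then reverses of each other by construction, even though $v(x)$ and $v(-x)$ are unrelated, so no global midpoint map is needed. This is the step you are missing, and without it the ``midpoint'' route does not close.

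Two smaller points. First, the paper stratifies $(S^m)^n$ by the number of \emph{antipodal} consecutive pairs (encoded via a sign-twist whose pattern $(+,-,+,\ldots,-,+)$ is palindromic precisely because $n$ is odd, making the strata $\beta$-invariant), not by the number of equal pairs; equal pairs are a continuous degeneration of the shortest geodesic and need no special stratum. Second, your strata are closed subsets, and passing to an open $\beta$-invariant cover admitting sections requires an equivariant extension step; the paper does this with a $\mathbb{Z}_2$-ENR neighborhood retraction lemma. Finally, for $m$ even the explicit construction is unnecessary: the paper simply combines the lower bound $\mathrm{TC}_n(S^m)=n+1$ with the equivariant connectivity/dimension upper bound $\mathrm{TC}^\beta_n(S^m)<\tfrac{nm+1}{m}+1$ to conclude $\mathrm{TC}^\beta_n(S^m)=n+1$.
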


The calculation of $\mathrm{TC}^\beta_n(S^m)$ in Theorem~\ref{main_theorem} should be 
compared with the value of $\mathrm{TC}_n(S^m)$, 
which is equal to $n$ if $m$ is odd and equal to $n+1$ if $m$ is even (see \cite{bgrt}). 
Since $\mathrm{TC}^\beta_2(S^m)=\mathrm{TC}^\Sigma_2(S^m) = 3$ for all $m\geq 1$ (see \cite{g}), this implies that in general 
$\mathrm{TC}_n \neq \mathrm{TC}^\beta_n$. 
On the other hand, in all of our examples in this paper we have 
$\mathrm{TC}^\beta_n=\mathrm{TC}^\Sigma_n$, but we do not 
expect this to be generic. 
The distinction between these two could be addressed by means 
of equivariant obstruction theory, something that we plan to investigate elsewhere. 
Note that situations where $\mathrm{TC}^\beta_n=\mathrm{TC}^\Sigma_n$ 
indicate that there are scenarios for which the intuitively more 
difficult problem of finding fully ``symmetrized'' motion planners is equally difficult as the simpler bidirectional motion planning problem (see for
instance Example~\ref{bidirectional_symmetrized_same}).

\begin{remark}\normalfont
According to \cite{g}, $\mathrm{TC}^\Sigma_2(S^m) = 3$ for all $m$, and $\mathrm{TC}^\Sigma_n(S^m) = n+1$ when $m$ is even. Should $\mathrm{TC}^\beta_n(S^m) = n$ when $n$ is even and $m$ is odd, then we would have an
example for which $\mathrm{TC}^\beta_n \neq \mathrm{TC}^\Sigma_n$. Moreover, if this latter was the case when $m=1$,
then this would allow us to calculate the symmetrized topological complexity of the 
2-dimensional torus since $4\geq \mathrm{TC}^\beta_4(S^1) \geq \mathrm{TC}^\Sigma_2(T^2) \geq 4$
(see Proposition~\ref{bidirectional_bounded_below_by_symmetric} and \cite{jg}).
\end{remark}

We will show that bidirectional topological complexity can be estimated by considering 
a suitable symmetric product, in a similar way as $\mathrm{TC}^\Sigma_2(X)$ can be estimated by 
considering $SP^2(X)$ (see~\cite{jg} and~\cite{g}). Extending these ideas we 
are able to provide lower cohomological bounds for both $\mathrm{TC}^\beta_n$ and $\mathrm{TC}^\Sigma_n$
when $n$ is even. We get the following calculations:

\begin{theorem}\label{second_theorem} 
If $n$ is even and $m>1$, then $\mathrm{TC}^\Sigma_n(S^m) = n+1$.
\end{theorem}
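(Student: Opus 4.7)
The plan is to split according to the parity of $m$. When $m$ is even, Theorem~\ref{main_theorem} gives $\mathrm{TC}^\beta_n(S^m)=n+1$, and the chain $\mathrm{TC}^\beta_n\le\mathrm{TC}^\Sigma_n$ immediately yields the lower bound $\mathrm{TC}^\Sigma_n(S^m)\ge n+1$; the matching upper bound is the one cited from~\cite{g} in the remark above, and can alternatively be recovered by symmetrizing the usual $(n+1)$-slice motion planner on $S^m$ so that each slice and its local section are invariant under the order-reversing involution on tuples and on paths. This case therefore requires no genuinely new input.

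The substantive case is $m$ odd with $m\ge 3$, where $\mathrm{TC}_n(S^m)=n$ and the classical zero-divisor cup-length argument for $\mathrm{TC}_n$ only certifies a lower bound of $n$. For the upper bound $\mathrm{TC}^\Sigma_n(S^m)\le n+1$, the plan is to adjoin to an optimal $n$-slice motion planner witnessing $\mathrm{TC}_n(S^m)=n$ a single extra symmetric open set that absorbs the strata on which the planner cannot be made reversal-invariant, in the spirit of how $\mathrm{TC}^\Sigma_2(S^m)=3$ is built out of $\mathrm{TC}_2(S^m)=2$.

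The heart of the theorem is the lower bound $\mathrm{TC}^\Sigma_n(S^m)\ge n+1$ in this regime. Following the cohomological approach advertised in the paragraph preceding Theorem~\ref{second_theorem}, I would pass to the Borel construction $E\mathbb{Z}/2\times_{\mathbb{Z}/2}(S^m)^n$ for the order-reversing $\mathbb{Z}/2$-action, or equivalently to the appropriate symmetric product of $S^m$, and compute the zero-divisor ideal of the symmetrized evaluation fibration. Precisely when $m$ is odd and $n$ is even, a mod-$2$ Euler class arising from the sign representation (visible after pulling back along evaluation at a suitable coordinate) supplies one additional equivariant zero-divisor that is invisible to the ordinary cup-length; multiplying it with the $n$ standard zero-divisors should yield a nonzero $(n+1)$-fold product and hence the desired bound.

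The main obstacle is verifying the nonvanishing of this $(n+1)$-fold product. I would run the Serre spectral sequence of the Borel fibration $(S^m)^n\to E\mathbb{Z}/2\times_{\mathbb{Z}/2}(S^m)^n\to B\mathbb{Z}/2$, show that the equivariant Euler class of the antipodal-type action survives to $E_\infty$, and confirm that it multiplies nontrivially with the $n$ equivariant diagonal classes; it is at this last step that the parity hypotheses on $m$ and $n$ enter in an essential way. Once the top product is shown to survive, the two bounds match and the theorem follows.
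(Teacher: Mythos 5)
Your proposal diverges significantly from the paper's argument and, in its key step, remains unverified. The paper does not split by the parity of $m$: for all $m>1$ it proves the lower bound in one stroke by computing in $H^*(SP^2(S^m);\mathbb{F}_2)$ using Nakaoka's structure theorem. Explicitly, writing $n=2k$ and letting $e_m$ generate $H^m(S^m)$, relations (1) and (4) of Theorem~\ref{nakaoka_short} give $\phi(e_m\otimes 1)\cdot\phi(1\otimes e_m)=\phi(e_m\otimes e_m)=E_m(e_m)\neq 0$, so $\mathrm{cl}\bigl(H^*(SP^2(S^m);\mathbb{F}_2)\bigr)\geq 2$, and Theorem~\ref{lower_cohomology}(2) then yields $\mathrm{TC}^\Sigma_n(S^m)\geq k\cdot 2+1=n+1$. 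The point of passing through the subgroup $H\cong(\mathbb{Z}_2)^k$ and the factorization $X\to SP^2(X)\to SP^2(X)^k$ is precisely that the mod-$2$ torsion class $E_m(e_m)$ (which has no counterpart in $H^*(S^m\times S^m)$) provides the extra zero-divisor that ordinary cup-length cannot see, \emph{uniformly} in the parity of $m$. The upper bound is the connectivity estimate from \cite[Theorem 7.1]{g}, again independent of the parity of $m$.

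Your sketch identifies the right phenomenon --- that an extra mod-$2$ class ``invisible to the ordinary cup-length'' must be produced --- but the mechanism you propose (Borel construction of $(S^m)^n$ under $\beta$, an equivariant Euler class of the sign representation, survival in the Serre spectral sequence of the Borel fibration, and a nonzero $(n+1)$-fold product with ``the $n$ standard zero-divisors'') is not carried out, and it is not at all clear it can be. Two concrete problems: first, the $\beta$-action on $(S^m)^n$ is far from free, so the cohomology of the Borel construction does not straightforwardly coincide with that of $\beta P^n(S^m)$ and the localization/Euler-class machinery does not apply cleanly; second, when $m$ is odd there are only $n-1$ ordinary zero-divisors (the basic zero-divisor squares to zero over $\mathbb{F}_2$ as well), so the phrase ``the $n$ standard zero-divisors'' is already off by one and your count does not close. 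The paper avoids both difficulties by working with the honest quotient $SP^2(X)^k$ and with Nakaoka's description of $H^*(SP^2;\mathbb{F}_2)$. You would need to either supply the spectral-sequence computation in full or switch to the $SP^2$ approach to close the gap.
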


\begin{theorem}\label{third_theorem}
If $n$ is even and $m = 2^e$ with $e>1$, then 
$\mathrm{TC}^\Sigma_n(\mathbb{R}\mathrm{P}^m) = nm+1$. 
\end{theorem}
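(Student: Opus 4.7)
The plan is to establish the two inequalities $\mathrm{TC}^\Sigma_n(\mathbb{R}\mathrm{P}^m) \leq nm+1$ and $\mathrm{TC}^\Sigma_n(\mathbb{R}\mathrm{P}^m) \geq nm+1$ separately, using the sandwich $\mathrm{TC}_n \leq \mathrm{TC}^\beta_n \leq \mathrm{TC}^\Sigma_n$ from the introduction as the bridge between the two.

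For the upper bound I would invoke the general dimensional estimate $\mathrm{TC}^\Sigma_n(X) \leq n\dim(X)+1$ valid for any finite CW complex, which is the symmetrized analogue of Farber's classical cover bound. With $X = \mathbb{R}\mathrm{P}^m$ of dimension $m$, this is exactly the claimed upper bound $nm+1$; no use of $m = 2^e$ or of the parity of $n$ is required at this step.

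For the lower bound it suffices, via $\mathrm{TC}^\Sigma_n \geq \mathrm{TC}^\beta_n$, to show $\mathrm{TC}^\beta_n(\mathbb{R}\mathrm{P}^m) \geq nm+1$. Here I would use the cohomological lower bound for $\mathrm{TC}^\beta_n$ promised in the paragraph preceding Theorem~\ref{second_theorem}, obtained by identifying $\mathrm{TC}^\beta_n$ with a sectional category over a suitable Borel construction of $(\mathbb{R}\mathrm{P}^m)^n$ with respect to the order-reversing $\mathbb{Z}/2$-action. Working mod $2$, let $x \in H^1(\mathbb{R}\mathrm{P}^m; \mathbb{F}_2)$ be the generator and form the standard zero-divisors $\bar x_i = x_i + x_1$ for $i=2,\ldots,n$ inside $H^*((\mathbb{R}\mathrm{P}^m)^n; \mathbb{F}_2)$, together with the equivariant generator $w \in H^1(B\mathbb{Z}/2; \mathbb{F}_2)$ coming from the involution. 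The goal is to exhibit a product of these classes of total degree $nm$ that survives in the relevant equivariant cohomology group, which by the standard Schwarz-genus obstruction then yields $\mathrm{TC}^\beta_n(\mathbb{R}\mathrm{P}^m) \geq nm+1$.

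The main obstacle is this final non-vanishing step: all previous lower bounds for $\mathrm{TC}_n(\mathbb{R}\mathrm{P}^m)$ top out at $nm$, so the extra ``$+1$'' must come entirely from the presence of $w$, and verifying that the combined class does not collapse is delicate. The hypothesis $m = 2^e$ controls the parity of the binomial coefficients appearing in expansions of $\bar x_i^{k}$ through Lucas's theorem, forcing only the monomials one needs to survive mod $2$; the evenness of $n$ ensures the $\mathbb{Z}/2$-action behaves well on the product cohomology and does not trivialize the contribution of $w$. The restriction $e>1$ excludes the degenerate $m=2$ case, where the required top-degree monomial vanishes for purely parity reasons. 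Once this arithmetic check is completed, the lower bound is proved and combines with the dimension upper bound to give the claimed equality.
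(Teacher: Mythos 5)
Your upper bound is correct and matches the paper: both use the dimensional estimate $\mathrm{TC}^\Sigma_n(X) \leq n\dim(X)+1$ from Proposition~\ref{basic}(c). But your lower bound strategy diverges from the paper in a way that creates a genuine gap. You propose to reduce to $\mathrm{TC}^\beta_n(\mathbb{R}\mathrm{P}^m) \geq nm+1$ and then establish this via a Borel-construction / equivariant-cohomology argument. Two problems. First, routing through $\mathrm{TC}^\beta_n$ is actually the \emph{wrong} reduction here: the paper's Theorem~\ref{lower_cohomology} gives \emph{two} distinct cohomological bounds, part~(1) for $\mathrm{TC}^\beta$ and part~(2) for $\mathrm{TC}^\Sigma$, and the remark following it explicitly states that the $\mathrm{TC}^\beta$ bound is strictly weaker for $\mathbb{R}\mathrm{P}^m$ --- it yields only $4m \leq \mathrm{TC}^\beta_4(\mathbb{R}\mathrm{P}^m)$ and $6m-2 \leq \mathrm{TC}^\beta_6(\mathbb{R}\mathrm{P}^m)$, falling short of $nm+1$ by an increasing margin. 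Whether $\mathrm{TC}^\beta_n(\mathbb{R}\mathrm{P}^m) = nm+1$ is left open by the paper, and the paper deliberately avoids your reduction. Second, even granting the reduction, you never complete the non-vanishing step; you name the obstacle yourself and then simply assert that a Lucas-type parity check would save the day, while the equivariant cohomology machinery you invoke (Borel construction, classes $\bar x_i$ together with $w$) is not developed anywhere in the paper and is not an argument the paper supports.

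The paper's actual lower bound is entirely different. It applies Theorem~\ref{lower_cohomology}(2), namely $\mathrm{TC}^\Sigma_{2k}(X) \geq k\cdot\mathrm{cl}(H^*(SP^2(X);\mathbb{F}_2))+1$, which is proved by factoring the diagonal $X \to SP^2(X) \to (SP^2(X))^k$ and using Nakaoka's theorem that $Y \to SP^2(Y)$ is trivial in reduced mod-2 cohomology --- no Borel construction and no equivariant classes $w$. Combined with the external fact (Grant, \cite[Proposition 4.3]{g}) that the mod-2 cup-length of $SP^2(\mathbb{R}\mathrm{P}^m)$ equals $2m$ for $m=2^e>1$, this yields $\mathrm{TC}^\Sigma_n(\mathbb{R}\mathrm{P}^m) \geq (n/2)\cdot 2m + 1 = nm+1$ directly. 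This is also where the hypothesis on $m$ enters: it is the condition under which the cited cup-length computation holds, not (as you speculate) a parity obstruction to a particular top monomial. You would need to replace your sketched $\mathrm{TC}^\beta$ lower bound with the $\mathrm{TC}^\Sigma$-specific cohomological bound and the cup-length input from \cite{g} to make this proof go through.
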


The organization of this paper is as follows: in Section 2 we will develop the 
necessary concepts to define two versions of bidirectional topological complexity
and obtain properties similar to those of the symmetrized and the symmetric 
topological complexity. Then in Section 3 we will obtain cohomological lower bounds for
$\mathrm{TC}^\beta_n$ and $\mathrm{TC}^\Sigma_n$ when $n$ is even; 
and in Section 4 we will describe specific motion planners that realize
the calculation $\mathrm{TC}^\Sigma_2(S^m)=3$ obtained in~\cite{g}. Throughout this paper:
(1) we will assume that all topological spaces under consideration are connected, and (2) 
we will use the unreduced definition of topological complexity and sectional category.


\section{Bidirectional Topological Complexity} 

\subsection{Motivation and Definition}
There are two versions of ``symmetric'' topological complexity: 
the symmetric and the symmetrized. 
The symmetric defined in~\cite{fg} is one greater than 
the sectional category of the quotient fibration of the pullback 
of the fibration $e:X^I\to X\times X$ induced by the inclusion of the configuration space $F(X,2)=\{ (x,y)\in X\times X: x\neq y\}$ into $X\times X$. That is $\mathrm{TC}^S_2$, is one more the sectional category of 
the map $\epsilon_2$ in the following diagram
\[
\xymatrix{
X^I\ar[d]^e & P_{F(X,2)}\ar[d]\ar[r]\ar[l] & P_{F(X,2)}/\mathbb{Z}_2 \ar[d]^{\epsilon_2}\\
X\times X & F(X,2)\ar[r]\ar[l] & B(X,2) }
\]
where $P_{F(X,2)}$ is the total space of the pullback and $B(X,2)$ is the
2-braid space of $X$. 
The resulting number is denoted by $\mathrm{TC}^S_2(X)$. This definition intends to provide efficient planners, in the sense that: (1) if a planner is to connect $a\in X$ with itself, then it will do so by means of a constant path; and (2) if a planner allows us to go from $a_1$ to $a_2$ in $X$, then this latter planner will use the same path, but in reverse direction, to go from $a_2$ to $a_1$. Unlike the classical topological complexity $\mathrm{TC}_2(X)$, 
the number $\mathrm{TC}^S_2(X)$ is not a homotopy invariant (see~\cite{fg}). 
We refer the reader to \cite{bgrt} and \cite{r} for two possible definitions of the symmetric topological 
complexity $\mathrm{TC}^S_n$ for higher values of $n$.  

The second version of ``symmetric'' topological complexity is called the symmetrized topological complexity of $X$, denoted by $\mathrm{TC}^\Sigma_n(X)$, which is defined in \cite{bgrt} 
as the equivariant sectional category of the multievaluation map $e\colon X^{J_n}\to X^n$, 
where $J_n$ is the wedge of $n$ copies of the unit interval $I$ and the symmetric group $\Sigma_n$ is acting on these spaces by permutation. 
This definition turns out to be a homotopy invariant, but unfortunately when $n>2$ 
it is not clear how this definition is related to the motion planning problem which is
supposed to be the source of inspiration for all of these types of invariants.

We propose to remedy this by defining a new homotopy invariant closely related 
to $\mathrm{TC}^S_2$ and therefore to the motion planning problem. To do this we start
by identifying the generator of $\mathbb{Z}_2 = \langle \beta \rangle$ with the
permutation of $\Sigma_n$ given by $\beta(i) = n + 1 -i$. In terms of transpositions this permutation 
is given by
\[  \beta = \left\{ \begin{array}{ll}
(1\ n)\cdots (k\ k+1) & \mbox{if } n=2k \\
(1\ n)\cdots (k\ k+2)(k+1) & \mbox{if } n=2k+1
\end{array} \right. .\]

Then we will let $\mathbb{Z}_2$ act on $X^n$ by 
$\beta\cdot (x_1,\dots,x_n) = (x_{\beta(1)},\dots,x_{\beta(n)})
= (x_n,...,x_1)$, and on the space $X^I$ by $(\beta\cdot \alpha)(t) = \alpha(1-t)$. 
This way the multievaluation map becomes a $\beta$-equivariant map.

We will use the ideas mentioned above to define what perhaps should have been called symmetric topological complexity. To avoid clashing with the nomenclature already 
chosen in \cite{bgrt} and~\cite{fg}, we will use the word ``bidirectional.''  
An open set $U\subset X^n$ will be called $\beta$-symmetric if $\beta(U) = U$, 
and if $s$ is a $\beta$-equivariant local section of the multievaluation map then
we will say that $s$ is a bidirectional motion planner.

\begin{definition}\normalfont The $n$-th bidirectional topological complexity of a space $X$, denoted by $\mathrm{TC}^\beta_n(X)$, is the smallest number of $\beta$-invariant open subspaces that
cover $X^n$ on each of which there is a bidirectional motion planner.
\end{definition}  

In other words, the bidirectional topological complexity is the $\beta$-equivariant
sectional category of the multievaluation fibration $e\colon X^I \to X^n$. Note that this is now 
directly related to the motion planning problem, since a local section of this map 
will be a local planner over a $\beta$-symmetric neighborhood realizing paths that can be run in either direction.

\subsection{Properties}

It is not hard to see that the definition of bidirectional topological complexity 
$\mathrm{TC}^\beta_2$ agrees with the 
symmetrized topological complexity $\mathrm{TC}^\Sigma_n$ when $n=2$.
Moreover, according to \cite{bgrt} we know that 
\[ \mathrm{TC}^S_2(X)-1\leq \mathrm{TC}^\Sigma_2(X) = \mathrm{TC}^\beta_2(X) \leq \mathrm{TC}^S_2(X). \]

In this regard, the definition of $\mathrm{TC}^S_2$ can be generalized 
to the bidirectional setting as follows.
Consider the diagram 
\[
\xymatrix{
X^I\ar[d]^e & P_{F(X,n)}\ar[d]\ar[r]\ar[l] & P_{F(X,n)}/\beta \ar[d]^{\epsilon_n}\\
X^n & F(X,n)\ar[r]\ar[l] & F(X,n)/\beta }
\]
where $P_{F(X,n)}\to F(X,n)$ is the pullback of the first 
vertical map and $\epsilon_n$ is the resulting map on the quotients. 
Then define $\mathrm{TC}^b_n(X) = 1 + \mathrm{secat}(\epsilon_n)$. 
The above inequalities can be generalized to:

\begin{proposition}\label{2-symmetric} When $X$ is a CW-complex we have
\[ \mathrm{TC}^b_n(X)-1\leq \mathrm{secat} (X\stackrel{\Delta}{\to} X^n/\beta)
\leq \mathrm{TC}^\beta_n(X) \leq \mathrm{TC}^b_n(X). \]
\end{proposition}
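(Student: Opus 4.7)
The plan is to prove the three inequalities separately, by translating between bidirectional motion planners, sections of $\epsilon_n$, and lifts along $\Delta\colon X\to X^n/\beta$. The common theme is that the midpoint of a planner's path gives a lift of $\Delta$, and conversely, concatenating coordinate paths extracted from a null-homotopy of the inclusion into $X^n/\beta$ produces a bidirectional planner. For the middle inequality $\mathrm{secat}(\Delta)\leq\mathrm{TC}^\beta_n(X)$, I begin with $\beta$-invariant open sets $U_1,\ldots,U_k$ covering $X^n$ and equivariant sections $s_i\colon U_i\to X^I$. Since $s_i(\beta u)(t)=s_i(u)(1-t)$, the midpoint $u\mapsto s_i(u)(1/2)$ is $\beta$-invariant and descends to a map $\bar\phi_i\colon U_i/\beta\to X$, and the explicit homotopy
\[
H_i(u,\tau)_j \;=\; s_i(u)\!\left(\tfrac{1-\tau}{2}+\tau\tfrac{j-1}{n-1}\right)
\]
interpolates from $\Delta\circ\bar\phi_i$ at $\tau=0$ to the inclusion at $\tau=1$. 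The key check is that the substitution $t\mapsto 1-t$ in $s_i$ implements exactly the index permutation $j\mapsto n+1-j$, so $H_i$ is $\beta$-equivariant and descends to the required null-homotopy on $U_i/\beta$.

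For the right inequality $\mathrm{TC}^\beta_n(X)\leq\mathrm{TC}^b_n(X)$, each section of $\epsilon_n$ over an open subset of $F(X,n)/\beta$ pulls back along the free double cover $F(X,n)\to F(X,n)/\beta$ to a $\beta$-equivariant section of $e\colon X^I\to X^n$ over a $\beta$-invariant open subset of $F(X,n)$; this accounts for $\mathrm{secat}(\epsilon_n)$ of the required open sets. The remaining ``$+1$'' is supplied by a single $\beta$-invariant open neighborhood $U_0$ of the fat diagonal $D_n=X^n\setminus F(X,n)$, which the CW hypothesis provides as a $\beta$-equivariant regular neighborhood that deformation retracts onto $D_n$. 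On $D_n$ itself I build a bidirectional planner stratum by stratum: for each $\beta$-symmetric partition of $\{1,\ldots,n\}$ into blocks of equal coordinates, I use paths that are constant on each block of indices and interpolate between consecutive blocks, arranging the interpolations so that the symmetry $j\mapsto n+1-j$ matches the internal symmetry of the partition, and then extend these choices to $U_0$ via the retraction.

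For the left inequality $\mathrm{TC}^b_n(X)-1\leq\mathrm{secat}(\Delta)$, I start from open sets $W_i$ covering $X^n/\beta$ with lifts $\psi_i\colon W_i\to X$ and null-homotopies $\Delta\circ\psi_i\simeq\iota$, restrict to $V_i=W_i\cap(F(X,n)/\beta)$, and pull back to $\beta$-invariant $\tilde V_i\subset F(X,n)$. I then lift the null-homotopy equivariantly to a map $\tilde V_i\times I\to X^n$ starting at the $\beta$-fixed point $\Delta\tilde\psi_i(u)$; uniqueness of the path lift along the branched cover $X^n\to X^n/\beta$ (with the start in the branch locus and the end in the free locus) forces $\beta$-equivariance. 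The $n$ coordinate paths $h_j(u,\cdot)$ from $\tilde\psi_i(u)$ to $u_j$ satisfy $h_j(\beta u,\cdot)=h_{n+1-j}(u,\cdot)$, and concatenating them as $h_1^{-1}*h_2*h_2^{-1}*h_3*h_3^{-1}*\cdots*h_n$, parameterized so that $u_j$ is reached at time $(j-1)/(n-1)$, yields a path $\alpha_u$ with $\alpha_{\beta u}(t)=\alpha_u(1-t)$ --- a $\beta$-equivariant section of $e$ over $\tilde V_i$, equivalently a section of $\epsilon_n$ over $V_i$.

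The main obstacle will be enforcing strict $\beta$-equivariance in the two steps that must cross the branch locus of $X^n\to X^n/\beta$: the equivariant extension of a planner to a neighborhood of the fat diagonal in the right inequality, and the equivariant lift of the null-homotopy in the left. Both can be handled by working with a $\beta$-invariant CW subdivision of the product structure on $X^n$ and inducting on skeleta, carefully tracking which cells are $\beta$-fixed versus $\beta$-paired, but this bookkeeping is the technical heart of the argument.
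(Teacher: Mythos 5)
Your argument for the middle inequality is correct and is in substance the paper's: the $\beta$-invariance of the midpoint $s_i(u)(1/2)$ and your homotopy $H_i$ are an explicit form of the paper's observation that a bidirectional planner descends to a local section of $X^I/\beta \to X^n/\beta$, which is identified with the diagonal via the homotopy equivalence $c\colon X\to X^I/\beta$. The genuine gap is in the right-hand inequality $\mathrm{TC}^\beta_n(X)\leq \mathrm{TC}^b_n(X)$. In your scheme the $\mathrm{secat}(\epsilon_n)$ pulled-back open sets all lie inside $F(X,n)$, so the single remaining set $U_0$ must contain the entire fat diagonal $D_n=X^n\setminus F(X,n)$, and you must produce a bidirectional planner over $U_0$. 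For $n\geq 3$ this is impossible whenever $X$ is not contractible, even ignoring equivariance: $D_n$ contains $A=\{(x,y,x,\ldots,x): x,y\in X\}\cong X\times X$, and a section $s$ of $e$ over any set containing $A$ gives, via $(x,y)\mapsto \bigl(t\mapsto s(x,y,x,\ldots,x)(t/(n-1))\bigr)$, a global section of the path fibration $X^I\to X\times X$, forcing $\mathrm{TC}_2(X)=1$. In particular the step ``interpolate between consecutive blocks'' in your stratumwise construction is itself an unsolvable global motion planning problem, and no retraction or equivariant bookkeeping can rescue a single open set containing $D_n$. For comparison, the paper disposes of this inequality by invoking Corollary 9 of Farber--Grant ``verbatim''; that is the $n=2$ situation, where the complement of the configuration space is the thin diagonal and constant paths (extended by the equivariant ENR lemma) furnish the extra planner. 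Your attempt to write the citation out shows exactly where the verbatim transcription stops working for $n\geq 3$: within this strategy the inequality cannot be established, and a genuinely different argument would be needed.

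For the left-hand inequality your outline can be made to work, but you locate the difficulty in the wrong place. Lifting the null-homotopies equivariantly through the branched double cover $X^n\to X^n/\beta$ is precisely the delicate point you defer to a cell-by-cell induction, and it is unnecessary: since $\beta$ acts freely on $F(X,n)$, the right-hand square in the paper's second diagram --- $P_{F(X,n)}/\beta \to X^I/\beta$ lying over $F(X,n)/\beta \to X^n/\beta$ --- is a pullback. Hence an open set $W\subset X^n/\beta$ over which $\Delta$ (equivalently $X^I/\beta\to X^n/\beta$) admits a section up to homotopy yields, after intersecting with $F(X,n)/\beta$ and using that $\epsilon_n$ is a fibration, an honest local section of $\epsilon_n$; this gives $\mathrm{secat}(\epsilon_n)\leq \mathrm{secat}(X\stackrel{\Delta}{\to}X^n/\beta)$, i.e.\ $\mathrm{TC}^b_n(X)-1\leq \mathrm{secat}(\Delta)$, with no lifting across the fixed locus and no concatenation of coordinate paths.
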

\begin{proof}
The last inequality can be obtained by following verbatim
Corollary 9 in~\cite{fg}. Now the diagonal 
inclusion of $X$ into $X^n/\beta$ can be replaced using the following commutative diagram
\[ \xymatrix{
X\ar[r]^c \ar[rd]_\Delta & X^I/\beta \ar[d] \\
 & X^n/\beta }\] 
and noticing that the map $c:X\to X^I/\beta$, that sends a point $x\in X$ to the
class of the constant map $c_x$, is a homotopy equivalence.
We also have a commutative diagram
\[ \xymatrix{
X^I\ar[d]^e \ar[r] & X^I/\beta\ar[d] & P_{F(X,n)}/\beta \ar[l] 
\ar[d]^{\epsilon_n}\\
X^n \ar[r] & X^n/\beta & F(X,n)/\beta \ar[l]}\]
The second inequality is easily obtained by noticing that
a bidirectional motion planner induces a local section for the middle map
and hence for $X\stackrel{\Delta}{\to} X^n/\beta$. The first one follows
from the fact that the second square in the above diagram is a pullback.
\end{proof}

\begin{remark} \normalfont
We will not further develop the topological complexity $\mathrm{TC}^b_n(X)$ as 
this is likely
not going to be a homotopy invariant of $X$ when $n>2$. 
However, note that when $n=2$ the space $X^2/\beta$ is the symmetric square
$SP^2(X)$, and so we obtain the following: 
\[ \mathrm{TC}^S_2(X)-1\leq \mathrm{secat} (X\stackrel{\Delta}{\to}SP^2(X))\leq \mathrm{TC}^\Sigma_2(X) \leq \mathrm{TC}^S_2(X). \]
The second inequality from the left hand side had been already noticed in ~\cite{jg}.
These inequalities show that $\mathrm{secat}(X\stackrel{\Delta}{\to} SP^2(X))$ is within one unit of 
$\mathrm{TC}^\Sigma_2(X)$. Moreover, note that the sectional category of 
$X\stackrel{\Delta}{\to} SP^n(X)$ bears its own significance as it is also a homotopy invariant 
of $X$. We will see later that this invariant is also related to bidirectional topological complexity.
\end{remark}

A natural question to ask at this point is whether the bidirectional complexity $\mathrm{TC}^\beta$ relates 
to the symmetrized complexity $\mathrm{TC}^\Sigma$. In this regard, we have the following result.
 
\begin{proposition} $\mathrm{TC}^\beta_n(X) \leq \mathrm{TC}^\Sigma_n(X)$, for all $n\geq 2$. 
\end{proposition}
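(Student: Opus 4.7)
The plan is to reduce the inequality to the construction of a single auxiliary map. Concretely, I aim to build a continuous, $\beta$-equivariant map $\Phi\colon X^{J_n}\to X^I$ fitting into a commutative triangle over $X^n$, that is, $e\circ\Phi=e_{J_n}$, where $e_{J_n}\colon X^{J_n}\to X^n$ is the multievaluation at the $n$ free endpoints of the wedge and $e\colon X^I\to X^n$ is the multievaluation at $0,\tfrac{1}{n-1},\dots,1$. Once such a $\Phi$ is available, the inequality follows formally: if $\{U_i\}_{i=1}^{k}$ is a $\Sigma_n$-equivariant open cover of $X^n$ with $\Sigma_n$-equivariant local sections $\tilde s_i\colon U_i\to X^{J_n}$ realizing $\mathrm{TC}^\Sigma_n(X)=k$, then since $\langle\beta\rangle\subset\Sigma_n$ each $U_i$ is automatically $\beta$-invariant and each $\tilde s_i$ is automatically $\beta$-equivariant, so the compositions $\Phi\circ\tilde s_i$ give $\beta$-equivariant local sections of $e$ over the same cover, forcing $\mathrm{TC}^\beta_n(X)\le k$.

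For $\Phi$ I propose a \emph{round-trip concatenation through the wedge point}. A point $f\in X^{J_n}$ corresponds to an $n$-tuple of paths $\gamma_1,\dots,\gamma_n$ in $X$ with common initial value $b$; write $a_i=\gamma_i(1)$. Declare $\Phi(f)\in X^I$ to be the path whose restriction to $\bigl[\tfrac{i-1}{n-1},\tfrac{i}{n-1}\bigr]$ (for $i=1,\dots,n-1$) first traces $\gamma_i$ in reverse from $a_i$ to $b$ at double speed on the first half and then traces $\gamma_{i+1}$ forward from $b$ to $a_{i+1}$ at double speed on the second half. Continuity of $\Phi$ and the identity $e\circ\Phi=e_{J_n}$ are immediate from continuity of concatenation and reversal together with the fact that $\Phi(f)$ passes through $a_i$ exactly at time $\tfrac{i-1}{n-1}$.

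The only real content, and the main obstacle, is the $\beta$-equivariance check $\Phi(\beta\cdot f)=\beta\cdot\Phi(f)$. Here $\beta\cdot f$ has ordered path tuple $(\gamma_n,\dots,\gamma_1)$ while $\beta\cdot\alpha$ is the time-reversal of $\alpha\in X^I$. I would verify the identity by a direct segment-by-segment comparison on $\bigl[\tfrac{i-1}{n-1},\tfrac{i}{n-1}\bigr]$: on one hand $\Phi(\beta\cdot f)$ traces $\gamma_{n+1-i}$ in reverse followed by $\gamma_{n-i}$ forward (reading off the permuted tuple); on the other hand $\beta\cdot\Phi(f)$ reverses the segment of $\Phi(f)$ on $\bigl[\tfrac{n-i-1}{n-1},\tfrac{n-i}{n-1}\bigr]$, which by construction is $\gamma_{n-i}$ reversed followed by $\gamma_{n+1-i}$ forward, and whose reversal is again $\gamma_{n+1-i}$ reversed followed by $\gamma_{n-i}$ forward. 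The two expressions coincide, which is precisely where the symmetry of the subdivision of $I$ under $t\mapsto 1-t$ interacts with the combinatorics of $\beta$; everything else in the argument is bookkeeping.
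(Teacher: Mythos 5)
Your map $\Phi$ is exactly the paper's map $f\colon X^{J_n}\to X^I$ (the paper writes $\gamma_i=\alpha_i^{-1}\cdot\alpha_{i+1}$ on the $i$-th segment, which is precisely your ``reverse $\gamma_i$ then forward $\gamma_{i+1}$''), and the formal deduction from $\langle\beta\rangle\subset\Sigma_n$ is the same. You simply spell out the $\beta$-equivariance check that the paper asserts without detail; the segment-by-segment computation is correct.
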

\begin{proof}
If $\alpha$ is in $X^{J_n}$ then it defines an $n$-tuple of paths $(\alpha_1,\ldots,\alpha_n)$ with $\alpha_i(0)=\alpha_j(0)$ for all $i,j$; and if 
$\gamma$ is in $X^I$ we  can think of it as a sequence of concatenated paths 
$\gamma_1,\gamma_2,\ldots,\gamma_{n-1}$ determined by the distinguished points 
$\{ \frac{i}{n-1} \in I: i=0,1,\ldots,n-1 \}$.
Let $f:X^{J_n} \to X^I$ be given by $\alpha \mapsto \gamma_1\cdots\gamma_{n-1}$
where $\gamma_i=\alpha_i^{-1}\cdot \alpha_{i+i}$. 
Note that $f$ is $\beta$-equivariant and commutes with
the multievaluation maps. So we have a commutative diagram
\[ \xymatrix{
X^{J_n} \ar[rr]^f \ar[rd]_e & & X^I \ar[ld]^e \\
 & X^n & }\]
Therefore a $\Sigma_n$-equivariant section over a $\Sigma_n$-symmetric subset of 
$X^n$ will yield a $\beta$-equivariant section over a $\beta$-equivariant subset of 
$X^n$. The result follows.
\end{proof}

Before obtaining more properties about bidirectional topological complexity we need
to recall some facts about equivariant category and its relationship to 
equivariant sectional category. For simplicity, we will assume throughout this paper 
that all groups are finite and that any $G$-space is compact. We refer the reader to \cite{g} and \cite{gh} for
more details.

If $B$ is a $G$-space, then an open set $U\subset B$ is called $G$-categorical if 
the inclusion map $U\to B$ is $G$-homotopic to a map with values in a 
single orbit. The $G$-category of $B$ is denoted by $\mathrm{cat}_G(B)$ and is 
the smallest number of $G$-categorical open sets that cover $B$. Likewise, 
the equivariant sectional category of a $G$–map $p\colon E \to B$, denoted
$\mathrm{secat}_G(p)$, is the least integer $k$ such that $B$ may be covered by $k$ $G$-invariant 
open sets $U\subset B$ on each of which there exists a $G$–map $s\colon U \to E$ such that
$p\circ s$ is $G$-homotopic to the inclusion map $i\colon U\to B$. 
If $p$ is a $G$-fibration, then this latter condition can be replaced by $ps = i_U$.

\begin{proposition}\label{secat_less_cat}
\cite[Proposition 4.5]{gh}
Let $p \colon E \to B$ be a $G$–map. If $p(E^H)=B^H$ for all closed subgroups $H$ of $G$, then $\mathrm{secat}_G(p)\leq cat_G(B)$.
\end{proposition}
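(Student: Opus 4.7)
The plan is to convert a $G$-categorical cover of $B$ into a $G$-equivariant sectional cover of $p\colon E\to B$, using the hypothesis $p(E^H)=B^H$ to produce the needed lifts one orbit at a time.

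First I would fix a cover $U_1,\ldots,U_k$ of $B$ by $G$-categorical open sets, where $k=\mathrm{cat}_G(B)$. For each $j$, the $G$-equivariant inclusion $i_j\colon U_j\hookrightarrow B$ is $G$-homotopic to some $G$-map $c_j\colon U_j\to B$ whose image is contained in a single orbit $Gb_j\subset B$ (note that $G$-homotopy of $i_j$ forces the endpoint $c_j$ to be $G$-equivariant as well). Let $H_j=G_{b_j}$ be the stabilizer of $b_j$, so $b_j\in B^{H_j}$; by the standing hypothesis, there exists $e_j\in E^{H_j}$ with $p(e_j)=b_j$.

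Next I would manufacture an equivariant lift of the orbit by defining $\phi_j\colon Gb_j\to E$ via $\phi_j(gb_j):=ge_j$. This is forced to be well-defined since any relation $gb_j=g'b_j$ gives $g^{-1}g'\in H_j$ and hence $g'e_j=ge_j$ because $e_j$ is $H_j$-fixed; for finite $G$ the orbit $Gb_j$ is $G$-homeomorphic to $G/H_j$, so continuity of $\phi_j$ is automatic, and $G$-equivariance is built into the formula. Setting $s_j:=\phi_j\circ c_j\colon U_j\to E$ then gives a $G$-equivariant map satisfying $p\circ s_j=c_j$, which is $G$-homotopic to $i_j$ by the choice of $c_j$. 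That is exactly the local section condition required by the definition of $\mathrm{secat}_G$, so the $U_j$ witness $\mathrm{secat}_G(p)\leq k=\mathrm{cat}_G(B)$.

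The main obstacle is the well-definedness of $\phi_j$: the naive assignment $gb_j\mapsto ge_j$ requires a preimage $e_j$ of $b_j$ whose stabilizer already contains $H_j$, and without such a fixed-point preimage an equivariant extension to the full orbit can fail outright (only a non-equivariant set-theoretic lift survives). The hypothesis $p(E^H)=B^H$ for every closed $H\leq G$ is engineered precisely to supply such an $e_j$ for each stabilizer that arises in the cover; once that point is in hand, the remaining steps — continuity on $Gb_j$, composition with $c_j$, and the identification of $p\circ s_j$ with $c_j\simeq_G i_j$ — are routine.
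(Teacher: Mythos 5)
The paper does not prove this proposition itself; it cites it as Proposition 4.5 of Grant--Colman (\cite{gh}). Your argument is correct and is precisely the standard proof of that cited result: take a $G$-categorical cover, compress each inclusion $U_j\hookrightarrow B$ $G$-equivariantly into a single orbit $Gb_j$, use the hypothesis $p(E^{H_j})=B^{H_j}$ with $H_j$ the isotropy group of $b_j$ to choose an $H_j$-fixed preimage $e_j$, and observe that $gb_j\mapsto ge_j$ is then a well-defined continuous $G$-equivariant section of $p$ over the orbit (since $Gb_j\cong G/H_j$), which precomposed with the compression gives the required local $G$-section up to $G$-homotopy.
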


In order to get stronger upper bounds for $\mathrm{TC}^\beta$ we will make use of 
the following result.

\begin{theorem}\label{conn}
\cite[Theorem 3.5]{g}
Let $p\colon E \to B$ be a Serre $G$-fibration with fibre $F$, whose base
$B$ is a $G$-CW complex of dimension at least 2. Assume that $\pi_j(F^H) = 0$ 
for all subgroups $H \leq G$ and all $j < s$, where $s\geq 0$. 
Then
\[ \mathrm{secat}_G(p) < \frac{\dim B + 1}{s + 1} +1 .\]
\end{theorem}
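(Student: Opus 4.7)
The plan is to adapt the classical \v{S}varc–Ganea sectional-category bound $\mathrm{secat}(p) < (\dim B+1)/(s+1) + 1$ for fibrations with an $(s-1)$-connected fibre to the $G$-equivariant setting. The two ingredients that need to be replaced are (1) ordinary cellular obstruction theory for extending partial sections, and (2) the combinatorial grouping of cells into ``blocks of width $s+1$'' on which sections can be constructed.

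First I would set up equivariant obstruction theory against the $G$-CW decomposition of $B$. The elementary fact is that an equivariant map from an orbit $G/H$ to a $G$-space $Y$ is determined by the image of the identity coset, which must lie in the fixed set $Y^H$; equivalently, $\mathrm{Map}_G(G/H, Y) \cong Y^H$ naturally. Consequently, if a partial equivariant section of $p$ is already defined on a $G$-subcomplex $A \subseteq B$ and we attach a $G$-cell of type $G/H \times D^j$ along a map $\varphi : G/H \times S^{j-1} \to A$, the obstruction to extending the section across this cell lies in $\pi_{j-1}(F^H)$. The hypothesis $\pi_j(F^H)=0$ for all subgroups $H \leq G$ and all $j<s$ therefore allows one to extend any partial equivariant section across every $G$-cell of relative dimension at most $s$ without obstruction. (When $s=0$ the statement is vacuously a trivial bound, and the hypothesis $\dim B \geq 2$ removes pathological low-dimensional cases that would force us to worry about path-connectedness of the orbits of $B$.)

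Next comes the counting step. Partition the set $\{0,1,\ldots,\dim B\}$ of cell dimensions into $N = \lceil (\dim B + 1)/(s+1)\rceil$ consecutive blocks of width $s+1$. For the $k$-th block, construct a $G$-invariant open neighborhood $U_k$ of the union of open $G$-cells whose dimension lies in that block by equivariantly thickening these cells away from the earlier blocks, which will be covered by the earlier $U_j$'s. By choosing the thickenings compatibly with the $G$-action (using equivariant collars on cells $G/H \times D^j$) one can arrange that $U_k$ $G$-deformation retracts onto a $G$-complex of relative dimension at most $s$ with respect to its base. The obstruction-theoretic step above then produces an equivariant section on each $U_k$, and since the $N$ sets $U_k$ cover $B$, we obtain
\[
\mathrm{secat}_G(p) \;\leq\; N \;=\; \left\lceil \frac{\dim B + 1}{s+1}\right\rceil \;<\; \frac{\dim B+1}{s+1} + 1.
\]

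The main obstacle is the careful production of the $G$-invariant opens $U_k$ together with the $G$-equivariant deformation retractions onto a complex of the appropriate relative dimension. Non-equivariantly this is a routine application of regular neighborhoods in a CW complex and mapping cylinder surgery. Equivariantly one must insist that every thickening, collar, and homotopy is chosen compatibly with the $G$-action, which is where the $G$-CW structure (as opposed to a mere triangulation with $G$-action) is essential: the local product structure $G/H \times D^j$ provides the equivariant tubular geometry that lets the classical argument be carried out orbit-type by orbit-type and then glued.
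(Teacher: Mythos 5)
The paper does not prove this theorem; it is quoted verbatim from Grant's paper \cite[Theorem 3.5]{g}, so there is no in-text proof against which to compare your argument directly.

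Your outline follows a ``direct covering'' route: group cell dimensions into $N=\lceil(\dim B+1)/(s+1)\rceil$ blocks of width $s+1$, show each block-neighborhood $U_k$ is $G$-homotopy equivalent to a $G$-complex of dimension $\leq s$, and then obtain a $G$-section over each $U_k$ by equivariant obstruction theory starting from the hypothesis $\pi_j(F^H)=0$ for $j<s$. This is a legitimate strategy, but the single claim you flag as ``routine'' --- that $U_k$ $G$-deformation retracts onto something of (absolute, not ``relative'') dimension $\leq s$ --- is exactly where all the work lives, and the phrase ``relative dimension at most $s$ with respect to its base'' does not pin down a usable statement. The obstruction theory only buys you sections over $G$-complexes built by attaching cells of dimension $\leq s$ starting from nothing; the cells in block $k$ have dimensions up to $k(s+1)+s$, so what must actually be shown is that $B^{(k(s+1)+s)}\setminus B^{(k(s+1)-1)}$ has the homotopy type of a complex of dimension $\leq s$. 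The cleanest way to get this (and the $G$-invariance and openness simultaneously) is not ``equivariant collars on cells'' but rather: take a $G$-triangulation of $B$ (Illman), pass to the barycentric subdivision, color each barycenter $\hat\sigma$ by $\lfloor \dim\sigma/(s+1)\rfloor$, let $C_k$ be the full subcomplex on vertices of color $k$ (dimension $\leq s$, since a flag of simplices with dimensions all in one block has length $\leq s+1$), and take $U_k$ to be the open star neighborhood of $C_k$, which is $G$-invariant and $G$-deformation retracts to $C_k$. Without some such argument the proof has a real gap. Two smaller points: (i) you should not frame the obstruction step as ``extending a partial section'' --- there is no partial section to extend across blocks; you are constructing a section from scratch over the $\leq s$-dimensional retract of each $U_k$; (ii) getting the process started requires $F^H\neq\emptyset$ for every isotropy group $H$ that actually occurs in $B$, which for $s\geq 1$ is part of the hypothesis ($\pi_0(F^H)=0$) but which is worth flagging since you invoke $s=0$ as a ``vacuous'' case. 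I would also note that the published proof in \cite{g} may not take this covering route at all --- the non-equivariant bound is most often proved via the Ganea--\v Svarc fiberwise-join characterization (the $k$-fold join $*^k_B E\to B$ has a $(k(s+1)-2)$-connected fiber, hence sections once $k(s+1)-1\geq\dim B$), and there are equivariant analogues; so even if you make your argument precise, you should not expect it to match the source.
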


We are now in position to derive more properties about bidirectional 
topological complexity. They are summarized in the following result.

\begin{proposition}\label{basic} Suppose $X$ and $Y$ are two topological spaces. We have:
\begin{itemize}
\item[a)] If $X\simeq Y$, then $\mathrm{TC}^\beta_n(X) = \mathrm{TC}^\beta_n(Y)$.
\item[b)] $\mathrm{TC}^\beta_{2n}(X)\leq \mathrm{TC}^\beta_{2n+1}(X)$.
\end{itemize}
When $X$ is a finite CW-complex, we have:
\begin{itemize}
\item[c)] $\mathrm{TC}_n(X) \leq \mathrm{TC}^\beta_n(X) \leq \mathrm{TC}^\Sigma_n(X)\leq n\dim(X)+1$.
\item[d)] If $X$ is $q$-connected, then 
\[ \mathrm{TC}^\beta_n(X) < \frac{n \dim X +1}{q+1} + 1.\]
\end{itemize}
\end{proposition}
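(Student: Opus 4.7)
The plan is to address (a)--(d) by adapting the arguments used for $\mathrm{TC}_n$ and $\mathrm{TC}^\Sigma_n$, with care for $\beta$-equivariance.

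For (a), a homotopy equivalence $f \colon X \to Y$ with inverse $g$ induces $\beta$-equivariant maps $f^n \colon X^n \to Y^n$ and $f_\sharp \colon X^I \to Y^I$ (post-composition with $f$ commutes with path reversal) that fit into a commuting square with the multievaluations. Both horizontal maps are homotopy equivalences, so standard equivariant-fibration arguments give $\mathrm{secat}_\beta(e^X) = \mathrm{secat}_\beta(e^Y)$. Alternatively one transfers a bidirectional cover of $Y^n$ back to $X^n$ by pulling the open sets back along $f^n$ and correcting the sections using a homotopy $gf \simeq \mathrm{id}_X$ applied symmetrically at the two endpoints of each path.

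For (b) I would exploit that $\beta$ fixes the middle coordinate of $X^{2n+1}$. Choose a basepoint $x_0 \in X$ and define
\[ \iota \colon X^{2n} \to X^{2n+1}, \qquad \iota(x_1,\ldots,x_{2n}) = (x_1,\ldots,x_n,x_0,x_{n+1},\ldots,x_{2n}), \]
which is a $\beta$-equivariant section of the $\beta$-equivariant projection $p \colon X^{2n+1} \to X^{2n}$ that forgets the middle coordinate. Next, construct a $\beta$-equivariant reparametrization $\psi \colon X^I \to X^I$, $\psi(\alpha)(t) = \alpha(\phi(t))$, from a piecewise-linear $\phi \colon I \to I$ satisfying $\phi(1-t) = 1 - \phi(t)$ that sends the grid $\{k/(2n-1)\}_{k=0}^{2n-1}$ onto the grid $\{k/(2n)\}_{k=0}^{2n}$ with the midpoint $\tfrac{1}{2}$ omitted. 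Then $e_{2n} \circ \psi = p \circ e_{2n+1}$, and given a bidirectional cover $\{U_i\}$ of $X^{2n+1}$ with planners $\sigma_i$, the preimages $\iota^{-1}(U_i)$ form a $\beta$-invariant open cover of $X^{2n}$ on each of which $\psi \circ \sigma_i \circ \iota$ is a bidirectional planner for $e_{2n}$. The main subtlety is producing the antisymmetric reparametrization; this reduces to the identity $\tau_{2n-1-k} = 1 - \tau_k$ between the surviving sample times $\tau_k$, which lets linear interpolation furnish $\phi$.

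For (c), the first inequality is tautological since any bidirectional cover is a cover in the sense of $\mathrm{TC}_n$, the middle inequality is the preceding proposition, and the upper bound $\mathrm{TC}^\Sigma_n(X) \leq n \dim X + 1$ follows from Theorem~\ref{conn} applied to the $\Sigma_n$-equivariant multievaluation with the vacuous connectivity hypothesis $s = 0$. For (d), apply Theorem~\ref{conn} to the $\beta$-equivariant fibration $e \colon X^I \to X^n$: the generic fiber has the homotopy type of $(\Omega X)^{n-1}$, while over a $\beta$-fixed base point the fiber's $\beta$-fixed subspace consists of palindromic paths and has the homotopy type of a product of copies of $\Omega X$ (with an extra contractible factor when $n$ is even). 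Both are $(q-1)$-connected when $X$ is $q$-connected, so choosing $s = q$ in the theorem yields the stated inequality.
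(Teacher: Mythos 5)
Your proof is correct, and on parts (b) and (d) it is essentially the paper's own argument: (b) is proved there by precomposing a planner for $e_{2n+1}$ with the $\beta$-equivariant insertion of a basepoint in the middle slot (your antisymmetric reparametrization $\psi$ just makes explicit a rescaling the paper leaves implicit), and (d) is proved exactly as you do, by identifying $[(\Omega X)^{n-1}]^{\beta}$ with $(\Omega X)^k$ or $(\Omega X)^k\times P_0X$ and applying Theorem~\ref{conn} with $s=q$. For (a) the paper only cites \cite[Proposition 4.7]{bgrt}; your direct transfer-and-correct argument is in that spirit (note the correction homotopy must be inserted, symmetrically, at every sample point, not just the two ends of the whole path, and your first alternative --- ``the horizontal maps are homotopy equivalences, so standard equivariant-fibration arguments apply'' --- is too quick, since a $\beta$-map that is merely a nonequivariant homotopy equivalence does not preserve equivariant sectional category without further input such as an equivariant Whitehead-type argument on fixed sets). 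The one genuinely different step is the bound $\mathrm{TC}^\Sigma_n(X)\le n\dim X+1$ in (c): the paper checks $e\bigl((X^{J_n})^H\bigr)=(X^n)^H$, applies Proposition~\ref{secat_less_cat} to get $\mathrm{TC}^\Sigma_n(X)\le \mathrm{cat}_{\Sigma_n}(X^n)$, and then uses Marzantowicz's bound $\mathrm{cat}_{\Sigma_n}(X^n)\le\dim(X^n/\Sigma_n)+1\le n\dim X+1$, whereas you invoke Theorem~\ref{conn} with $s=0$; this route does work and is the alternative the paper itself points out in the remark following the proposition, but be aware that the $s=0$ hypothesis is not literally vacuous: one still needs $F^H\neq\emptyset$ for every subgroup $H\le\Sigma_n$ (otherwise no equivariant section can exist near orbits with isotropy $H$ and the dimension bound fails), and this nonemptiness --- guaranteed here by path-connectedness of $X$ --- is exactly the fixed-point surjectivity condition the paper verifies; both routes also tacitly use that $X^n$ carries an equivariant CW structure and that the multievaluation is an equivariant fibration, points the paper addresses only in (d).
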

\begin{proof}
The first statement can be proved following the corresponding 
arguments in \cite[Proposition 4.7]{bgrt}. 
For the second statement pick a point $x_0$ in $X$. Now it suffices to note that 
a bidirectional motion planner for $e_{2n+1}:X^I\to X^{2n+1}$ precomposed with the 
$\beta$-equivariant map $\phi(x_1,\ldots,x_{2n})=(x_1,\ldots,x_n,x_0,x_{n+1},\dots,x_{2n})$
yields a bidirectional motion planner for $e_{2n}:X^I\to X^{2n}$.

For (c), we want to apply
Proposition \ref{secat_less_cat} to the $\Sigma_n$-map $e\colon X^{J_n}\to X^n$.
For this, note that if $H$ is a subgroup of $\Sigma_n$, then a multipath 
$\alpha=(\alpha_1,\ldots,\alpha_n)$ is in $(X^{J_n})^H$ when its components
$\alpha_i$ are identical for all indexes $i$ that are moved by permutations 
in $H$. Likewise, $(x_1,\ldots,x_n)$ is in $(X^n)^H$ if and only if its
components $x_i$ are identical for all indexes that are moved by permutations
in $H$. As $X^n$ is path connected, we have $e((X^{J_n})^H) = (X^n)^H$ for 
all subgroups $H$ of $\Sigma_n$. Then, $\mathrm{TC}^\Sigma_n(X)\leq cat_{\Sigma_n}(X^n)$. 
Now, according to Corollary 1.12 in~\cite{m} (see also~\cite{colman}), since $X^n$ is a finite complex 
and $(X^n)^{\Sigma_n} \cong X$ is connected, it follows that
$cat_{\Sigma^n}(X^n) \leq \dim(X^n/\Sigma_n) +1$. Part (c) follows since
$\dim(X^n/\Sigma_n)\leq n\dim(X)$.
     
To prove (d), note that the multievaluation map 
$e \colon X^I \to X^n$ is induced by the inclusion map 
$\{ \frac{i}{n-1} \in I: i=0,1,\ldots,n-1 \} \hookrightarrow I$. Moreover, this latter is a 
$\beta$-cofibration and hence the map $e$ is a $\beta$-fibration 
(see \cite[Example 2.4 and Proposition 2.5]{g}). Recall
also that the fiber of $e$ is equivalent to $(\Omega X)^{n-1}$, and note 
that the action of $\beta$ on the fiber
is given by $\beta\cdot (\alpha_1,\ldots,\alpha_{n-1})=
(\alpha_{n-1}^{-1},\ldots,\alpha_1^{-1})$. This implies that the fixed-point set
$[(\Omega X)^{n-1}]^\beta$ can be identified with 
$(\Omega X)^{k} \times P_0 X$ when $n-1=2k+1$ and with 
$(\Omega X)^{k}$ when $n-1=2k$, where $P_0 X$ is the space of paths
in $X$ that start at a given point in $X$. Therefore the fixed-point set is 
$(q-1)$-connected, and we can apply Theorem~\ref{conn} to obtain (d). \end{proof}

\begin{remark}\normalfont
Parts (c) and (d) in the previous Proposition can also be derived from Theorem 7.1 in \cite{g}.
\end{remark}

\begin{example}\normalfont 
Suppose $g\geq 2$ and $m\geq 2$, and let $g\mathbb{R}\mathrm{P}^m$ be the connected sum
of $\mathbb{R}\mathrm{P}^m$ with itself $g$ times. 
Then by Proposition~\ref{basic}(c),Theorem 1.3 of~\cite{cv} and Theorem 1.1 of~\cite{ag}, 
the value of $\mathrm{TC}_n, \mathrm{TC}^\beta_n$ and $\mathrm{TC}^\Sigma_n$ of 
$g\mathbb{R}\mathrm{P}^m$ equals $nm+1$ for all $n\geq 2$.

Similarly, using the calculations in ~\cite{ggghmr}, the value of $\mathrm{TC}_n, \mathrm{TC}^\beta_n$ and 
$\mathrm{TC}^\Sigma_n$ of an orientable, closed, connected surface of genus $g$ equals
$2n+1$ for all $n,g\geq 2$. 
\end{example}

\begin{example}\label{even_spheres} \normalfont
A test calculation is always that of the topological complexity of spheres. As an application of the previous result we have
\[ \mathrm{TC}^\beta_n(S^m) < \frac{nm + 1}{m} + 1. \]
It is well-known that $\mathrm{TC}_n(S^m)=n+\delta$, where $\delta$ is equal to $1$ when
$m$ is even, and equal to $0$ when $m$ is odd (see \cite{r}).
Then it follows that
$n\leq \mathrm{TC}^\beta_n(S^m) \leq n+1 $.
This latter uncertainty can be settled when $m$ is even yielding
\[ \mathrm{TC}^\beta_n(S^m) = n+1. \]
\end{example}

Before calculating other values of $\mathrm{TC}^\beta_n(S^m)$ let us recall that it is possible to
get upper bounds for $\mathrm{TC}_n(X)$ when $X$ is a polyhedron by constructing local sections of 
the multievaluation map over a cover of $X^n$ consisting of Euclidean Neighborhood Retracts 
(ENRs) (see Proposition 2.2 in~\cite{r}). In the equivariant setting we have the following 
definition.

\begin{definition}
A $G$-space $X$ is a $G$-ENR if $X$ is $G$-homeomorphic to a $G$-retract of some open 
$G$-subspace in a orthogonal representation of $G$. 
\end{definition}

The following result, which is the equivariant version of
Corollary 8.7 in Chapter 4 of~\cite{d}, will allow us to obtain an upper bound for 
$\mathrm{TC}^\beta_n(S^m)$.

\begin{lemma}\label{g_enr}
Suppose that both $X$ and $Y$ are $G$-ENRs and that $X$ is a $G$-subspace of $Y$. Then there is 
a $G$-invariant open subspace $U$ of $Y$ containing $X$, a $G$-retraction $r\colon U\to X$ and a
$G$-homotopy $\Theta$ between the inclusion map $i_U\colon U\to Y$ and $i_X\circ r\colon U \to Y$,
where $i_X$ is the inclusion map of $X$ in $Y$.  
\end{lemma}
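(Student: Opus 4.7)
The plan is to adapt Dold's proof of the non-equivariant statement (Corollary 8.7 in Chapter~IV of \cite{d}) by replacing open subsets of Euclidean space with $G$-invariant open subsets of orthogonal $G$-representations, using the fact that the $G$-action is by linear isometries so that convex combinations are $G$-equivariant and the ambient metric is $G$-invariant.

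First I would unpack the $G$-ENR structure of $Y$: identify $Y$ with a $G$-retract of a $G$-invariant open subset $V$ of some orthogonal representation $W$, with $G$-retraction $\sigma\colon V\to Y$. Since $X\subset Y\subset V\subset W$ we may regard $X$ as sitting inside $W$. The key step is then to produce a $G$-invariant open neighborhood $U_0$ of $X$ in $V$ together with a $G$-retraction $\rho\colon U_0\to X$. To do this I would invoke the $G$-ENR structure of $X$: fix an orthogonal representation $W'$, an open $G$-invariant $V'\subset W'$, and a $G$-retraction $\rho'\colon V'\to X$. The inclusion $X\hookrightarrow V'$ is a $G$-map from $X\subset W$ to $W'$, and I would $G$-equivariantly extend it to a map $\tilde j\colon U_1\to V'$ defined on a $G$-invariant open neighborhood $U_1$ of $X$ in $V$, using the equivariant Tietze/ANR extension theorem for maps into the $G$-ANR $V'\subset W'$. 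Composing, $\rho:=\rho'\circ\tilde j\colon U_0\to X$, with $U_0:=U_1\cap V$, is the desired $G$-retraction, and $U:=U_0\cap Y$ supplies the $G$-invariant open neighborhood of $X$ in $Y$ with $G$-retraction $r:=\rho|_U$.

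For the homotopy, I would set
\[
\Theta(u,t):=\sigma\bigl((1-t)\,u+t\,i_X r(u)\bigr),
\]
where the convex combination takes place inside the representation $W$. Since $W$ carries a $G$-invariant inner product and $\sigma,r$ are $G$-equivariant, $\Theta$ is automatically $G$-equivariant; and $\Theta(u,0)=\sigma(u)=u$ because $u\in Y$, while $\Theta(u,1)=\sigma(r(u))=r(u)$ because $r(u)\in X\subset Y$. What remains is to ensure that the segment from $u$ to $r(u)$ lies in $V$ so that the formula makes sense. Using the $G$-invariant distance function $d(\,\cdot\,,W\smallsetminus V)$ (which is $G$-invariant since the metric on $W$ is) and continuity of $r$ near $X$, I would shrink $U$ to a smaller $G$-invariant open neighborhood of $X$ in $Y$ on which $\|u-r(u)\|<d(r(u),W\smallsetminus V)$; on this shrunken $U$ the segment stays in $V$, completing the construction.

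The main obstacle is the equivariant extension step: producing $\tilde j$, or equivalently showing that the $G$-ENR $X$ is a $G$-neighborhood retract in the $G$-invariant open $V\subset W$. Non-equivariantly this is folklore, but to do it $G$-equivariantly one must invoke either the equivariant Tietze extension theorem for maps into a $G$-ANR (applicable because $W$, and hence $V$, is a metric $G$-space and $X$ can be taken closed in a $G$-neighborhood after intersecting with a closed tube) or the characterization of $G$-ENRs as $G$-ANRs admitting a finite-dimensional $G$-embedding. Once this neighborhood retraction is in hand, the rest of the argument is routine linear algebra in $W$.
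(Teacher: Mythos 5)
Your proof is correct and follows essentially the same route as the paper's: embed into an orthogonal $G$-representation, obtain a $G$-equivariant retraction $\rho$ from a $G$-invariant open neighborhood of $X$ onto $X$, restrict to those points of $Y$ whose straight-line segment to $\rho(y)$ stays inside the domain of $\rho$, and take the linear homotopy. The two places you deviate are matters of detail rather than of strategy. First, the paper obtains the $G$-invariant neighborhood retraction of the embedded $G$-ENR $X$ by directly citing \cite{bgrt} and tom Dieck (Proposition 5.2.1 of~\cite{td}), whereas you re-derive it from a second $G$-ENR embedding of $X$ via equivariant Tietze extension; this is more work but produces the same ingredient. Second, you compose the straight-line homotopy with a $G$-retraction of a neighborhood of $Y$ onto $Y$, and shrink $U$ so the segment stays in its domain, so that $\Theta$ actually lands in $Y$ as the statement requires; the paper's formula $\Theta(y,t)=(1-t)y+tr(y)$ only lands in the ambient $\mathbb{R}^N$ and leaves this final retraction to $Y$ implicit, so your version is the more careful one at that step.
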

\begin{proof}
Suppose that $Y$ is equivariantly embedded in a orthogonal $G$-representation $\mathbb{R}^N$.  
As noted in~\cite{bgrt}, since $X$ is $G$–equivariantly embedded in $\mathbb{R}^N$, 
there exists a $G$–invariant open neighborhood $V$ of $X$ in $\mathbb{R}^N$ and a 
$G$–equivariant retraction $\rho \colon V\to X$ (see also Proposition 5.2.1 of~\cite{td}). 
Following the proof of Proposition 8.6 in Chapter 4 of~\cite{r}, let $W$ be the subset of $V$ 
consisting of points $y\in V$ such that the line segment from $y$ to $\rho(y)$ lies in $V$, and
let $U = Y\cap W$. 
Then $U$ is $G$-invariant open subspace of $\mathbb{R}^N$ containing $X$, and $r\colon U \to X$ given 
by $\rho$ restricted to $U$ is a $G$-retraction. Moreover $\Theta(y,t)=(1-t)y +tr(y)$ is
a $G$-homotopy between $i_U$ and $i_X\circ r$ as wanted.
\end{proof}

\noindent{\bf Proof of Theorem~\ref{main_theorem}}

The case when $m$ is even was treated in Example~\ref{even_spheres}. We will prove the case when both 
$m$ and $n$ are odd, but first we need to set some notation. For each $d=0,\ldots,n-1$
we define $F_{n,d}$ as the subspace of $(S^m)^n$ consisting of $n$-tuples $(x_1,\ldots,x_n)$ 
for which there is a set
of exactly $d$ indices $1\leq i_1< \ldots < i_d\leq n-1$ such that $x_{i_j} = x_{i_j+1}$ 
for each $i_j$. 
For instance, $F_{n,0}$ and $F_{n,n-1}$ are the configuration space and diagonal of $S^m$ respectively. 

Now asssume that both $m$ and $n$ are odd. For each $0\leq j \leq n-1$, we let
\[V_j= \{ (x_1,\ldots,x_n) \in (S^m)^n : (x_1,-x_2,x_3,-x_4,\ldots,-x_{n-1},x_n)\in F_{n,j} \} .\]
Note that the subspaces $V_j$ are $\beta$-invariant and provide a cover of $S^m$. 
Pick a non-vanishing vector field $v$ on $S^m$, write $n=2l+1$, 
and for each $n$-tuple $(x_1,\ldots,x_n)$ define a $\beta$-motion planner as follows:
\begin{enumerate}
\item if $x_{i} = x_{i+1}$, then use a constant path;
\item if $x_{i} \neq - x_{i+1}$, then use the shortest path on $S^m$ that joins these two points;
\item if $x_{i} = - x_{i+1}$, then 
\begin{enumerate}
\item when $i\leq l$: use the vector field $v$ to travel from $x_{i}$ to $x_{i+1}$ through the
great arc in the direction of $v(x_i)$;
\item and when $i\geq l+1$: travel backwards through the great arc from $x_{i+1}$ to $x_i$ in the 
direction of $v(x_{i+1})$.
\end{enumerate}
\end{enumerate}
These rules define bidirectional motion planners on each of the subspaces $V_j$.
Now note that the fixed point sets $((S^m)^n)^{\langle 1 \rangle}$, 
$((S^m)^n)^{\langle \beta \rangle}$, $V_j^{\langle 1 \rangle}$, and $V_j^{\langle \beta \rangle}$ 
are ENRs since they are locally compact and locally contractible. 
Hence by Theorem 4.10 of~\cite{bgrt} it follows that they are $\beta$-ENRs.
Then, by applying Lemma~\ref{g_enr} to each $V_j$ we can extend these rules to define $n$ bidirectional motion on open $\beta$-invariant subsets of $X^n$, which in turn implies that 
$\mathrm{TC}^\beta_n(S^m)\leq n$. 
The result follows from the inequality $n=TC(S^m)\leq \mathrm{TC}^\beta_n(S^m)$ when $m$ is odd.
\hfill $\square$


\subsection{Symmetric Products and Topological Complexity}

As noted in~\cite{g}, the symmetrized topological complexity $\mathrm{TC}^\Sigma_2(X)$ 
is intimately related to the symmetric product $SP^2(X)$. The following result
extends this to higher topological complexities. To simplify notation we will 
write $\beta P^n (X)$ 
instead of $X^n/\beta$ which we will call the $n$-th bidirectional product of $X$, 
and note that its homotopy type depends only on that of $X$. 

\begin{proposition}\label{lower_bounds}
If $X$ a CW-complex, then: 
\begin{enumerate}
\item $\mathrm{TC}^\Sigma_n(X) \geq \mathrm{secat}(X\to SP^n(X))$, 
\item $\mathrm{TC}^\beta_n (X) \geq \mathrm{secat} (X\to \beta P^n(X))$, and 
\item $\mathrm{secat}(\beta P^n(X)\to  SP^n(X)) \leq \mathrm{secat}(X\to SP^n(X))$. 
\end{enumerate}
\end{proposition}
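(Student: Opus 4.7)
The plan for all three parts is to build commutative diagrams that let me push equivariant local sections down to an appropriate quotient and then post-compose with a homotopy inverse of a constant-map inclusion. The key preliminary, generalizing what is already used in the proof of Proposition~\ref{2-symmetric}, is that the map $c$ sending $x\in X$ to the class of its constant map is a homotopy equivalence both as $X\to X^I/\beta$ and as $X\to X^{J_n}/\Sigma_n$. The reparametrization homotopy $H_s(\alpha)(t)=\alpha((1-s)t+s/2)$ (applied coordinatewise in the $J_n$ case) is $\beta$- or $\Sigma_n$-equivariant, so it descends to either quotient; I would record this as a preliminary lemma before starting the three arguments.

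For (1), I start with a $\Sigma_n$-invariant open cover $\{U_i\}_{i=1}^{k}$ of $X^n$ carrying $\Sigma_n$-equivariant local sections $s_i\colon U_i\to X^{J_n}$ of the multievaluation $e$, where $k=\mathrm{TC}^\Sigma_n(X)$. Because the quotient by a finite group is an open map, the images $\overline{U}_i:=U_i/\Sigma_n$ form an open cover of $SP^n(X)$, and each $s_i$ descends to a strict section $\overline{s}_i$ of the quotient $\overline{e}\colon X^{J_n}/\Sigma_n\to SP^n(X)$. Letting $h$ be a homotopy inverse of $c$, the compositions $h\circ\overline{s}_i\colon \overline{U}_i\to X$ are homotopy sections of the diagonal $\Delta\colon X\to SP^n(X)$, because in the commutative triangle
\[ \xymatrix{
X \ar[r]^-{c}_-{\simeq} \ar[rd]_-{\Delta} & X^{J_n}/\Sigma_n \ar[d]^-{\overline{e}} \\
& SP^n(X)
} \]
one has $\Delta\circ(h\circ\overline{s}_i)=\overline{e}\circ c\circ h\circ\overline{s}_i\simeq \overline{e}\circ\overline{s}_i=\mathrm{incl}$. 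Part (2) is the same argument verbatim with $\beta$ in place of $\Sigma_n$, using the triangle already exhibited in the proof of Proposition~\ref{2-symmetric}.

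Part (3) is the most direct: factor the map $X\to SP^n(X)$ as $X\xrightarrow{\Delta'}\beta P^n(X)\xrightarrow{g}SP^n(X)$, where $\Delta'$ is the diagonal composed with the $\beta$-quotient and $g$ is the further $\Sigma_n/\beta$-quotient. Given an open cover of $SP^n(X)$ by sets $U_i$ with homotopy sections $\sigma_i\colon U_i\to X$, the compositions $\Delta'\circ\sigma_i\colon U_i\to \beta P^n(X)$ are automatically homotopy sections of $g$, since $g\circ\Delta'\circ\sigma_i=\Delta\circ\sigma_i\simeq\mathrm{incl}$.

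I do not anticipate a genuine obstacle: once the equivariant contracting homotopy is recorded the arguments are essentially formal, and $\Sigma_n$-equivariance of the reparametrization is immediate because it acts coordinatewise on $X^{J_n}$ while $\Sigma_n$ merely permutes coordinates. The only bookkeeping to be careful about is that descending a strict equivariant section to the quotient yields only a homotopy section of $X\to SP^n(X)$ (resp.\ $X\to \beta P^n(X)$) after composing with $h$, but this is exactly what the unreduced sectional category requires.
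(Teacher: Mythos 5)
Your argument matches the paper's in all essentials: for (1) and (2) you descend equivariant strict sections of $e$ to the quotient and compose with a homotopy inverse of the constant-path map $c$, which is exactly the mechanism the paper invokes via Proposition~\ref{2-symmetric}; and for (3) you use the factorization of the diagonal through $\beta P^n(X)\to SP^n(X)$, which is precisely the paper's commutative triangle together with the elementary fact (cited there as Proposition~2.1 of~\cite{r}) that $\mathrm{secat}(g)\leq\mathrm{secat}(g\circ h)$.

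One small slip in your preliminary lemma: the formula $H_s(\alpha)(t)=\alpha((1-s)t+s/2)$ is the right $\beta$-equivariant contraction of $X^I$ onto the constant paths, because reversal fixes the midpoint and the deformation pulls every path toward $\alpha(1/2)$. Applying the \emph{same} formula coordinatewise on $X^{J_n}$ does not work: at $s=1$ each leg is sent to $\alpha_i(1/2)$, and these values generally differ from leg to leg, so $H_1$ does not land on the constant multipaths. For $X^{J_n}$ one should instead contract toward the wedge point, $H_s(\alpha)_i(t)=\alpha_i((1-s)t)$; this is $\Sigma_n$-equivariant since $\Sigma_n$ only permutes the legs, and at $s=1$ every leg equals the common value $\alpha_i(0)$, so $H_1$ does land in the image of $c$. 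With that correction, $c\colon X\to X^{J_n}/\Sigma_n$ is a homotopy equivalence as claimed and the rest of your argument goes through unchanged.
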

\begin{proof}
The first two inequalities can be proved as in Proposition~\ref{2-symmetric}.
The last one follows from the commutative and equivariant diagram
\[ \xymatrix{
 & X\ar[rd]^\Delta\ar[ld]_\Delta & \\
\beta P^n(X) \ar[rr] & & SP^n(X)
}\]
where $\Delta$ stands for the corresponding diagonal map and the bottom map is the 
natural map $X^n/\beta \to X^n/\Sigma_n$ 
(see Proposition 2.1 in \cite{r}).
\end{proof}

\begin{example}\label{bidirectional_symmetrized_same}\normalfont
According to \cite[Theorem 5.7]{ggl}, $\mathrm{TC}_n(\mathbb{R}\mathrm{P}^m)=nm+1$ when $m$ is 
even and $n>m$. Thus, by Proposition \ref{basic}, we have
\[ nm+1=\mathrm{TC}_n(\mathbb{R}\mathrm{P}^m) \leq TC^\beta_n(\mathbb{R}\mathrm{P}^m)\leq 
\mathrm{TC}^\Sigma_n(\mathbb{R}\mathrm{P}^m)\leq n\dim(\mathbb{R}\mathrm{P}^m)+1= nm+1.\]
That is, 
$\mathrm{TC}_n(\mathbb{R}\mathrm{P}^m) = TC^\beta_n(\mathbb{R}\mathrm{P}^m) = 
\mathrm{TC}^\Sigma_n(\mathbb{R}\mathrm{P}^m) = nm+1$ when $m$ is even and $n>m$.

Moreover, it is known that the inclusion
$\mathbb{R}\mathrm{P}^{2}\stackrel{\Delta}{\to} SP^2(\mathbb{R}\mathrm{P}^{2})\simeq \mathbb{R}\mathrm{P}^4$ 
is null-homotopic (see the comment before Example 4.1 in \cite{g}). Then Proposition~\ref{lower_bounds} yields 
$\mathrm{TC}_2^\Sigma(\mathbb{R}\mathrm{P}^2)\geq \mathrm{secat}(\mathbb{R}\mathrm{P}^2 \stackrel{\Delta}{\to} SP^2(\mathbb{R}\mathrm{P}^2))$,
and this latter in turn is greater than the cup-length of $\mathbb{R}\mathrm{P}^4$.
Thus $\mathrm{TC}_2^\Sigma(\mathbb{R}\mathrm{P}^2) = 5$. Summarizing, we have
\[ \mathrm{TC}^\Sigma_{n}(\mathbb{R}\mathrm{P}^{2}) = 2n+1, \]
for all $n\geq 2$. 
Note that this is example shows that there are spaces for which the motion planning problem in the symmetrized setting, in the bidirectional setting, and in the ordinary case are
equally difficult to solve. 
\end{example}

Let us record a couple of properties of bidirectional products in the following
lemmas.

\begin{lemma}\label{factor}
When $n > 1$ is odd the space $\beta P^n(X)$ is homeomorphic to $\beta P^{n-1}(X)\times X$. 
\end{lemma}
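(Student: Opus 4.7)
The plan is to exploit the fact that when $n=2k+1$ is odd, the involution $\beta$ on $X^n$ fixes the middle coordinate. So I would split off the middle factor and show that the remaining action is exactly the defining action on $X^{n-1}$.

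More concretely, writing $n=2k+1$, I would define the map
\[ \phi\colon X^n \longrightarrow X^{n-1}\times X, \qquad (x_1,\ldots,x_{2k+1}) \longmapsto \bigl((x_1,\ldots,x_k,x_{k+2},\ldots,x_{2k+1}),\, x_{k+1}\bigr), \]
which is a homeomorphism with obvious inverse (reinsert the second coordinate in the middle). Equip $X^{n-1}\times X$ with the $\beta$-action that reverses the $X^{n-1}$ factor in the standard way and acts trivially on the final $X$ factor.

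The key verification is that $\phi$ is $\beta$-equivariant. Applying $\beta$ on the source gives $(x_{2k+1},\ldots,x_1)$, whose image under $\phi$ is $\bigl((x_{2k+1},\ldots,x_{k+2},x_k,\ldots,x_1),\, x_{k+1}\bigr)$; and applying $\beta$ on the target to $\phi(x_1,\ldots,x_{2k+1})$ yields exactly the same element, because reversal of the $(2k)$-tuple $(x_1,\ldots,x_k,x_{k+2},\ldots,x_{2k+1})$ produces $(x_{2k+1},\ldots,x_{k+2},x_k,\ldots,x_1)$, and the fixed middle coordinate $x_{k+1}$ stays put. Since $\phi$ is a $\beta$-equivariant homeomorphism, passing to quotients yields
\[ \beta P^n(X) \;=\; X^n/\beta \;\cong\; (X^{n-1}\times X)/\beta \;\cong\; (X^{n-1}/\beta)\times X \;=\; \beta P^{n-1}(X)\times X, \]
the second identification using that $\beta$ acts trivially on the $X$ factor.

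There is no real obstacle here; the proof is essentially a bookkeeping exercise exploiting the parity of $n$. The only thing to be careful about is matching the two descriptions of $\beta$: in the original definition $\beta$ on $X^{n-1}$ (with $n-1=2k$) sends $(y_1,\ldots,y_{2k})$ to $(y_{2k},\ldots,y_1)$, and this is precisely the action inherited via $\phi$ from the action on $X^n$ after removing the fixed middle coordinate.
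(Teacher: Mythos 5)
Your proof is correct and takes exactly the same approach as the paper's one-line proof, which simply observes that the middle copy of $X$ in $X^n$ is fixed by the $\beta$-action. You have merely spelled out the equivariance check that the paper leaves implicit.
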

\begin{proof}
We can easily prove this result by just noticing that the middle copy of $X$
in $X^n$ is fixed by the action of $\beta$.
\end{proof}

\begin{lemma}\label{sym_prod}
The space $SP^2(X^l)$ is homeomorphic to $\beta P^{2l}(X)$. 
\end{lemma}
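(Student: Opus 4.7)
The plan is to exhibit an explicit equivariant homeomorphism between $X^l\times X^l$ (with the $\Sigma_2$ swap action) and $X^{2l}$ (with the $\beta$ reversal action), and then take quotients. The right map to use is the ``fold-and-reverse'' bijection
\[ f\colon X^l\times X^l \longrightarrow X^{2l},\qquad \bigl((a_1,\ldots,a_l),(b_1,\ldots,b_l)\bigr)\longmapsto (a_1,\ldots,a_l,b_l,b_{l-1},\ldots,b_1). \]
This is clearly a homeomorphism since it is just a permutation of coordinate factors (with a continuous inverse that splits a $2l$-tuple into its first $l$ entries and the reverse of its last $l$ entries).

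The first step is to check that $f$ intertwines the two involutions. On the left side, the non-trivial element of $\Sigma_2$ swaps the two copies of $X^l$, sending $((a),(b))$ to $((b),(a))$, so its image under $f$ is $(b_1,\ldots,b_l,a_l,\ldots,a_1)$. On the right side, $\beta$ reverses the $2l$-tuple, so
\[ \beta\cdot f\bigl((a),(b)\bigr) \;=\; \beta\cdot(a_1,\ldots,a_l,b_l,\ldots,b_1) \;=\; (b_1,b_2,\ldots,b_l,a_l,a_{l-1},\ldots,a_1), \]
which agrees with $f\bigl(\sigma\cdot((a),(b))\bigr)$. Hence $f$ is $\Sigma_2$-equivariant when $\Sigma_2=\langle\beta\rangle$ is identified with the swap action via $\sigma\leftrightarrow\beta$.

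The second step is to pass to quotients. Since $f$ is an equivariant homeomorphism between free-or-not-free $\Sigma_2$-spaces (the equivariance is all that matters), it descends to a homeomorphism
\[ \overline{f}\colon SP^2(X^l) \;=\; (X^l\times X^l)/\Sigma_2 \;\xrightarrow{\;\cong\;}\; X^{2l}/\beta \;=\; \beta P^{2l}(X), \]
with inverse induced by $f^{-1}$. No subtleties arise because $f$ and $f^{-1}$ are both already continuous before quotienting, so the induced maps on the quotient topologies are mutually inverse continuous bijections.

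The only thing one might call an ``obstacle'' is bookkeeping: getting the reversal of the second block correct so that swapping the $\Sigma_2$ factors matches reversing the whole $2l$-tuple. Writing the two computations side by side as above resolves this, and the rest of the argument is formal.
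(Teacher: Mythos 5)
Your proof is correct and is essentially the same as the paper's: you exhibit an explicit coordinate permutation of $X^{2l}$ that intertwines the $\Sigma_2$ swap action on $X^l\times X^l$ with the $\beta$-reversal action, then pass to quotients. The only cosmetic difference is that your map reverses the second block of $l$ coordinates, while the paper's map reverses the first block; both are self-inverse equivariant homeomorphisms and the argument is otherwise identical.
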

\begin{proof}
Consider the map $\varphi:(X^{l})^2 \to X^{2l}$ given by
$(x_1,\ldots,x_{2l}) \mapsto (x_l,\ldots,x_1,x_{l+1},\ldots,x_{2l})$.
This map is its own inverse and respects the corresponding actions. Therefore
it defines an equivariant homeomorphism that passes to the quotients  
$SP^2(X^l)$ and $\beta P^{2l}(X)$.
\end{proof}

The following result could potentially be useful in practical scenarios since 
it allows us to bypass the symmetrized setting as we will see in the example right after.

\begin{proposition}\label{bidirectional_bounded_below_by_symmetric}
We have $\mathrm{TC}^\Sigma_2(X^l) \leq \mathrm{TC}^\beta_{2l}(X)$.
\end{proposition}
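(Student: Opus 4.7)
The plan is to realize this inequality by transporting bidirectional motion planners for $X^{2l}$ into bidirectional motion planners for $(X^l)^2$. The key ingredient is the equivariant homeomorphism $\varphi \colon (X^l)^2 \to X^{2l}$ from Lemma \ref{sym_prod}, which intertwines the swap action of $\beta$ on $(X^l)^2$ with the reversal action on $X^{2l}$. What remains is to produce a compatible $\beta$-equivariant map between the two path spaces.

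Concretely, I would define a map $\Phi \colon X^I \to (X^l)^I$ by sending $\alpha$ to the path whose $i$-th coordinate is the rescaled restriction of $\alpha$ to the subinterval $\bigl[\tfrac{l-i}{2l-1}, \tfrac{l+i-1}{2l-1}\bigr]$. This choice is engineered so that the endpoints of the $i$-th coordinate are precisely $\alpha\bigl(\tfrac{l-i}{2l-1}\bigr)$ and $\alpha\bigl(\tfrac{l+i-1}{2l-1}\bigr)$, i.e.\ the sample points at which $\varphi^{-1}$ places the $i$-th entries of the first and second $X^l$-factors. A direct calculation then yields $e_R \circ \Phi = \varphi^{-1} \circ e_L$, where $e_L \colon X^I \to X^{2l}$ and $e_R \colon (X^l)^I \to (X^l)^2$ are the respective multievaluations.

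Given these pieces, the proof finishes formally. Starting from a $\beta$-invariant open cover $U_1,\ldots,U_k$ of $X^{2l}$ realizing $\mathrm{TC}^\beta_{2l}(X) = k$, with $\beta$-equivariant sections $s_i$ of $e_L$, I set $V_i := \varphi^{-1}(U_i)$ and $t_i := \Phi \circ s_i \circ \varphi$. Each $V_i$ is $\beta$-invariant by equivariance of $\varphi$, the collection covers $(X^l)^2$, and each $t_i$ is a $\beta$-equivariant section of $e_R$ over $V_i$. Combining this with the identity $\mathrm{TC}^\Sigma_2(X^l) = \mathrm{TC}^\beta_2(X^l)$ recorded earlier in the paper then gives the desired bound.

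The main obstacle is pinning down the formula for $\Phi$. One has to thread the needle so that the coordinate reversal built into $\varphi$ on the first $l$ entries is compatible with the path-reversal action on $(X^l)^I$; with the subinterval prescription above, verifying $\beta$-equivariance reduces to the elementary identity $1 - \tfrac{l-i + t(2i-1)}{2l-1} = \tfrac{l-i + (1-t)(2i-1)}{2l-1}$, and the remaining requirements (endpoint values, commutativity with evaluation, invariance of $V_i$) are routine manipulations in the argument of $\alpha$.
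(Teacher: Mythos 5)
Your proposal is correct and follows essentially the same route as the paper: the paper also combines the equivariant homeomorphism $\varphi$ of Lemma~\ref{sym_prod} with a $\beta$-equivariant map $X^I\to (X^l)^I$ covering it, and for $l=2$ (the only case the paper writes out) its map --- middle third reparametrized in the first coordinate, the full path in the second --- is exactly your $\Phi$ specialized to $l=2$. Your explicit subinterval formula for general $l$, together with the equivariance identity you verify, just carries out uniformly what the paper leaves as ``for simplicity of notation we will only consider the case $l=2$.''
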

\begin{proof}
For simplicity of notation we will only consider the case $l=2$. We have
 the following commutative and equivariant diagram
\[ \xymatrix{
X^I \ar[r] \ar[d]^{e_{0,1/3,2/3,1}}  & (X\times X)^I \ar[d]^{e_{0,1}}  \\
X\times X\times X\times X \ar[r]_\varphi & (X\times X)\times (X\times X) } \]
The top map in this diagram sends a path $\alpha$ to $(\alpha_1,\alpha_2)$ 
where $\alpha_1$ is the reparametrization to [0,1] of $\alpha |_{[1/3,2/3]}$ 
and $\alpha_2$ is $\alpha$ again, and $\varphi$ is the equivariant homeomorphism
of the previous Lemma. Therefore a local section of 
the vertical map on the left hand side will induce one for the vertical
map on the right hand side. The result follows. 
\end{proof}

\begin{example} \normalfont
Suppose we want to consider sequential bidirectional planners of order $2l$ on 
the sphere $S^m$, when $m$ is even. According to the previous result, the number 
of such planners satisfies $\mathrm{TC}^\beta_{2l}(S^m)\geq \mathrm{TC}^\Sigma_2((S^m)^l)$. 
When $m$ is even this latter lower bound is greater than or equal to $\mathrm{TC}_2((S^m)^l)=2l+1$ according
to Corollary 3.12 in~\cite{bgrt}, and  
by connectivity we also know that $\mathrm{TC}^\beta_{2l}(S^m))\leq 2l+1$. Thus, when
$m$ is even we see that
\[\mathrm{TC}^\Sigma_2((S^m)^l) = \mathrm{TC}^\beta_{2l}(S^{m}) = 2l+1. \]
This is suggesting that, at least for highly connected spaces, 
bidirectional planning is as difficult as symmetrized planning in 
cartesian products.   
\end{example}


\section{Cohomological Lower Bounds}

Recall that the cup-length of a space $X$, denoted $cl(H^*(X;\mathbb{F}))$, is the longest length of a nontrivial product in $\overline{H}^*(X;\mathbb{F})$. A class in 
$\overline{H}^*(X^n;\mathbb{F})$ 
is called a zero divisor if when restricted to $\overline{H}^*(X;\mathbb{F})$ 
by $\Delta^*$ we get 
the zero class. It is well known that
$\mathrm{TC}_n(X)$ is bounded below by the zero-divisors cup-length 
$zcl(H^*(X^n;\mathbb{F}))$. Likewise in the symmetrized case, 
since $\mathrm{TC}^\Sigma_n(X)$ is bounded below by the sectional category of the diagonal inclusion
$\Delta\colon X\to SP^n(X)$, it follows that $\mathrm{TC}^\Sigma_n(X)$ is bounded below by the 
``symmetrized'' zero-divisors cup-length: the cup-length of the kernel of 
$\Delta^*:H^*(SP^n(X);\mathbb{F})\to H^*(X;\mathbb{F})$. 
Notice that this is useful only when 
$n!$ is not invertible in the coefficients field $\mathbb{F}$, 
otherwise $H^*(SP^n(X);\mathbb{F}) \cong H^*(X^n;\mathbb{F})^{\Sigma_n} \subset 
H^*(X^n;\mathbb{F})$ which implies 
$zcl(H^*(SP^n(X);\mathbb{F})) \leq zcl(H^*(X^n;\mathbb{F}))$. 
This means that we need to consider the torsion part of the cohomology 
of $SP^n(X)$ if we expect stronger lower bounds. We will rely on the work of
Nakaoka for this end.

\begin{example}\normalfont
Using cohomological lower bounds one can check that 
$\mathrm{TC}_n(M^{2m})=nm+1$, where $M^{2m}$ is a simply connected 
symplectic manifold of dimension $2m$. By Proposition~\ref{basic}, it follows
that
\[ \mathrm{TC}_n(M)=\mathrm{TC}^\beta_n(M)=nm+1. \]
\end{example}

\begin{theorem}\label{lower_cohomology} 
If $X$ is a finite CW-complex, then 
\begin{enumerate}
\item $\mathrm{TC}^\beta_{2n}(X) \geq cl(H^*(SP^2(X^n);\mathbb{F}_2)) + 1$, and \\
\item $\mathrm{TC}^\Sigma_{2k}(X)\geq k\cdot\mathrm{cl}(H^*(SP^2(X;\mathbb{F}_2))+1$.
\end{enumerate}
\end{theorem}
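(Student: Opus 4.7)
The plan is to deduce Part~(2) from Part~(1) via a cup-length inequality, and to prove Part~(1) using Proposition~\ref{lower_bounds}(2), Lemma~\ref{sym_prod}, and the standard cohomological lower bound for sectional category applied to the diagonal $X\to SP^2(X^n)$. The central technical ingredient is a vanishing statement for the induced diagonal pullback in $\mathbb{F}_2$-cohomology, which rests on Nakaoka's structural description of $H^*(SP^2;\mathbb{F}_2)$.

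For Part~(2), note that $\mathrm{TC}^\Sigma_{2k}(X)\geq\mathrm{TC}^\beta_{2k}(X)$ by Proposition~\ref{basic}(c), so applying Part~(1) with $n=k$ reduces the claim to
\[
\mathrm{cl}(H^*(SP^2(X^k);\mathbb{F}_2))\geq k\cdot\mathrm{cl}(H^*(SP^2(X);\mathbb{F}_2)).
\]
I would obtain this from the coordinate projections $p_i\colon X^k\to X$, which functorially induce $p_i^{(2)}\colon SP^2(X^k)\to SP^2(X)$. Fix classes $\alpha_1,\ldots,\alpha_c\in\overline{H}^*(SP^2(X);\mathbb{F}_2)$ realizing $c=\mathrm{cl}(H^*(SP^2(X);\mathbb{F}_2))$, so that $\alpha_1\cdots\alpha_c\neq 0$. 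The element
\[
\prod_{i=1}^{k}\prod_{j=1}^{c}(p_i^{(2)})^*\alpha_j\ \in\ H^*(SP^2(X^k);\mathbb{F}_2)
\]
is a product of $kc$ positive-degree classes; to verify it is non-zero I would pull it back along the natural map $(SP^2(X))^k\to SP^2(X^k)$, under which the composition with each $p_i^{(2)}$ becomes the $i$-th factor projection. The resulting pullback equals $(\alpha_1\cdots\alpha_c)^{\otimes k}$ in $H^*(SP^2(X))^{\otimes k}$, which is non-zero by K\"unneth.

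For Part~(1), Proposition~\ref{lower_bounds}(2) together with the identification $\beta P^{2n}(X)\cong SP^2(X^n)$ supplied by Lemma~\ref{sym_prod} gives
\[
\mathrm{TC}^\beta_{2n}(X)\ \geq\ \mathrm{secat}\bigl(\Delta\colon X\to SP^2(X^n)\bigr),
\]
where $\Delta$ is the composite $X\xrightarrow{\Delta_n}X^n\xrightarrow{\Delta_2}X^n\times X^n\xrightarrow{q} SP^2(X^n)$. The standard bound $\mathrm{secat}(\Delta)\geq\mathrm{cl}(\ker\Delta^*)+1$ then reduces the inequality to showing that
\[
\Delta^*\colon\overline{H}^*(SP^2(X^n);\mathbb{F}_2)\ \longrightarrow\ \overline{H}^*(X;\mathbb{F}_2)
\]
is identically zero; for then $\ker\Delta^*=\overline{H}^*(SP^2(X^n);\mathbb{F}_2)$ and hence $\mathrm{cl}(\ker\Delta^*)=\mathrm{cl}(H^*(SP^2(X^n);\mathbb{F}_2))$.

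To justify this vanishing I would factor $\Delta^*=\Delta_n^*\circ\Delta_2^*\circ q^*$, observe that $q^*$ lands in the $\Sigma_2$-invariants of $H^*(X^n\times X^n;\mathbb{F}_2)$, and note that $\Delta_2^*$ annihilates every transfer-type invariant $u\otimes v+v\otimes u$ because $2uv\equiv 0\pmod 2$. The substantive content---and the main obstacle of the proof---is to show that modulo~$2$ the image of $q^*$ consists entirely of such transfer classes. The potential counterweight is formed by Steenrod-power classes $P(\alpha)$, which would pair with the diagonal invariants $u\otimes u$ and so contribute $u^2$ to $\Delta^*$; these live naturally in the Borel cohomology $H^*_{\Sigma_2}(X^n\times X^n;\mathbb{F}_2)$, and Nakaoka's description of $H^*(SP^2;\mathbb{F}_2)$ is precisely what is needed to control the extent to which they descend to the strict quotient $SP^2(X^n)$.
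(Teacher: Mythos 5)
Your Part (1) is essentially the paper's argument: Proposition~\ref{lower_bounds}(2) plus Lemma~\ref{sym_prod} reduce everything to $\mathrm{secat}(X\to SP^2(X^n))$, and what remains is the vanishing of $\Delta^*$ on reduced mod-2 cohomology. But you never establish that vanishing; you explicitly call it ``the main obstacle'' and gesture at Nakaoka. Note that it does not follow from the ring presentation in Theorem~\ref{nakaoka_short} alone (which says nothing about $q^*E_s(b)$); the paper instead quotes it directly as Nakaoka's Theorems 11.2 and 11.4, namely that $Y\to SP^2(Y)$ is trivial on $\overline{H}^*(\,\cdot\,;\mathbb{F}_2)$ for any finite complex $Y$. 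With that citation Part (1) is fine.

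Part (2) has a genuine gap. First, the map you rely on does not exist: $SP^2$ is covariant, so the coordinate projections give a natural map $SP^2(X^k)\to (SP^2(X))^k$, not a map $(SP^2(X))^k\to SP^2(X^k)$; given a $k$-tuple of unordered pairs $\{a_i,b_i\}$ there is no well-defined way to assemble an unordered pair of $k$-tuples, and a map whose composites with the $p_i^{(2)}$ are the factor projections cannot exist in general. Your product $\prod_{i,j}(p_i^{(2)})^*\alpha_j$ is exactly the pullback of $(\alpha_1\cdots\alpha_c)^{\otimes k}$ along the correctly oriented map $SP^2(X^k)\to (SP^2(X))^k$, and whether that pullback is nonzero is precisely the issue, not something K\"unneth settles. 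Second, the intermediate inequality your reduction needs, $\mathrm{cl}(H^*(SP^2(X^k);\mathbb{F}_2))\ge k\cdot \mathrm{cl}(H^*(SP^2(X);\mathbb{F}_2))$, appears to be false in general: the remark following Theorem~\ref{nakaoka_short} records that for $X=\mathbb{R}\mathrm{P}^m$ with $m=2^e>1$ and $k=2$, Part (1) yields only $4m\le \mathrm{TC}^\beta_4(\mathbb{R}\mathrm{P}^m)$ (cup length $4m-1$ for $SP^2((\mathbb{R}\mathrm{P}^m)^2)$), strictly below $k\cdot\mathrm{cl}=4m$; indeed, if your reduction worked it would prove $\mathrm{TC}^\beta_{2k}(X)\ge k\cdot\mathrm{cl}+1$, a strengthening the paper deliberately avoids. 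The paper's proof of Part (2) uses more than $\beta$-symmetry: it restricts the $\Sigma_{2k}$-action to $H=\langle(1\ 2),(3\ 4),\ldots,(2k-1\ 2k)\rangle\cong(\mathbb{Z}_2)^k$, notes that an $H$-equivariant planner induces a section of the induced map over $X^{2k}/H$-level data, giving $\mathrm{TC}^\Sigma_{2k}(X)\ge\mathrm{secat}\bigl(X\xrightarrow{\Delta}(SP^2(X))^k\bigr)$, and then factors $\Delta$ through $SP^2(X)$ so that Nakaoka's vanishing kills all of $\overline{H}^*\bigl((SP^2(X))^k;\mathbb{F}_2\bigr)$, after which K\"unneth gives the cup length $k\cdot\mathrm{cl}(H^*(SP^2(X);\mathbb{F}_2))$. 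You should adopt that route; deducing (2) from (1) is not viable.
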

\begin{proof}
By Lemma~\ref{factor} and Lemma~\ref{sym_prod} we can factor the
diagonal inclusion of $X\to \beta P^m(X)$ as follows:
\[ X\stackrel{\Delta}{\to} X^{n} \stackrel{\Delta}{\to} SP^2(X^{n})\cong 
\beta P^{2n}(X). \]
Since the map $Y\stackrel{\Delta}{\to} SP^2(Y)$ is trivial in reduced 
mod-2 cohomology for any finite CW-complex $Y$ according 
to~\cite[Theorem 11.2 and Theorem 11.4]{n}, the first inequality follows from 
Proposition~\ref{lower_bounds}.

Now let $H =\langle (1\ 2), (3\ 4),\ldots, (2k-1\ 2k)\rangle
\subset \Sigma_n$. Note that $H$ is a subgroup of $\Sigma_n$ isomorphic 
to $(\mathbb{Z}_2)^n$. Thus we have 
$\mathrm{secat}_{H}(e_n) \leq \mathrm{secat}_{\Sigma_n}(e_n) =\mathrm{TC}^\Sigma_n(X)$.
Moreover, we have the following commutative diagram 
\[ \xymatrix{
X^{J_n}\ar[r]\ar[d]_{e_n} & X^{J_n}/H \ar[d]_{\overline{e}_n} & X\ar[l]_\simeq \ar[ld]^\Delta \\
(X^2)^k  \ar[r] &(SP^2(X))^k   & 
}\]
where $\overline{e}_n$ is the quotient of the evaluation map $e_n$, and the
homotopy equivalence on the right hand side is induced by the inclusion of $X$ 
into $X^{J_n}$. 
So, an $H$-symmetrized motion planner will induce a section for $\overline{e}_n$.
Thus $\mathrm{secat}(\Delta)=\mathrm{secat}(\overline{e}_n)\leq \mathrm{secat}_H(e_n)$.
The diagonal map in this latter diagram factors as
\[ X \stackrel{\Delta}{\to} SP^2(X)  \stackrel{\Delta}{\to} (SP^2(X))^k\]
Thus, using the fact that $Y\stackrel{\Delta}{\to} SP^2(Y)$ is trivial in reduced
mod-2 cohomology, we obtain the second inequality.
\end{proof}

\begin{remark}\normalfont
The mod-2 cohomology of the bidirectional product $\beta P^{2n}(X)$ contains more
information than that of $SP^2(X)$ as the cohomology of this latter
injects into that of the bidirectional product as a direct summand through the 
projection map $\pi_{1,2n}$ induced by  
$(x_1,\ldots,x_{2n})\mapsto (x_1,x_{2n})$  
as can be seen in the following commutative diagram
\[ \xymatrix{
 & X \ar[d]^\Delta \ar[rd]^\Delta \ar[ld]_\Delta & \\
SP^2(X)\ar[r]^r & \beta P^{2n}(X) \ar[r]^{\pi_{1,2n}} & SP^2(X) 
}\]
where $r$ is induced by $(x_1,x_2)\mapsto (x_1,\ldots,x_1,x_2,\ldots,x_2)$, 
and the bottom composite satisfies $\pi_{1,2n}\circ r = 1$.
 
The above diagram allows us to see that 
$\mathrm{secat}(X\stackrel{\Delta}{\to} \beta P^{2n}(X))$ is bounded
below by the cup-length of the kernel of 
$H^*(\beta P^{2n}(X))\stackrel{r^*}{\to} H^*(SP^2(X))$.
Also note that we have more projections $\pi_{i,2n-i+1}$ which allow the
cohomology of the symmetric product be injected in different ways into that
of the bidirectional product.
\end{remark}

The following calculations are based 
on Nakaoka's analysis \cite{n}, which is distilled in 
~\cite{jg}, and allow us to estimate higher symmetrized topological 
complexities. We record here in a brief way what we need from \cite{jg} and
\cite{n}. 

For the rest of this section we will work with cohomology with
coefficients modulo-2. There are two homomorphisms: $E_s:H^*(X) \to H^{*+s}(SP^2(X))$ 
and $\phi:H^*(X \times X) \to H^*(SP^2(X))$ that satisfy the following:

\begin{theorem}\label{nakaoka_short}
\cite[Theorem 4.4]{jg} Let $\{ b_0, b_1,\dots ,b_m \}$ be a homogeneous basis for $H^*(X)$.
A basis for $H^*(SP^2(X))$ consists of 1, the elements $E_s(b_i)$ with 
$2 \leq s \leq deg(b_i)$, and
the elements $\phi(b_i \otimes b_j )$ with $i < j$. The ring structure is determined 
by the two relations:
\begin{enumerate}
\item $\phi(b_i \otimes b_j) \cdot \phi(b_u \otimes b_v) = 
\phi((b_i\cdot b_u) \otimes (b_j \cdot b_v)) + 
\phi((b_i \cdot b_v) \otimes (b_j \cdot b_u))$.
\item $E_s(b_i) \cdot \phi(b_u \otimes b_v) = E_s(b_i) \cdot E_t(b_j ) = 0$.
\end{enumerate}
\end{theorem}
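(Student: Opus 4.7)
The plan is to construct the maps $E_s$ and $\phi$ via the Borel construction together with a mod-$2$ transfer, then derive both the additive basis and the two relations by comparing $SP^2(X)$ with the Borel space $X^2_{h\mathbb{Z}_2}=X^2\times_{\mathbb{Z}_2}E\mathbb{Z}_2$ of the swap action and pulling computations back to $X\times X$, where the Künneth formula controls everything.

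First I would define the maps. Let $\sigma$ be the swap involution on $X\times X$ with quotient $\pi\colon X\times X\to SP^2(X)$. Define $\phi(a\otimes b)\in H^*(SP^2(X);\mathbb{F}_2)$ as the transfer-style class characterized by $\pi^*\phi(a\otimes b)=a\otimes b+b\otimes a$; its existence follows from a Smith-theoretic analysis near the ramification locus $\Delta\subset X\times X$ combined with the Cartan-Leray spectral sequence on the free part. For $E_s$, use that the collapse map $X^2_{h\mathbb{Z}_2}\to SP^2(X)$ is a homotopy equivalence off $\Delta$ but has fiber $B\mathbb{Z}_2$ over $\Delta\cong X$; the polynomial generator $\tau\in H^1(B\mathbb{Z}_2;\mathbb{F}_2)$ then produces classes $\tau^s\cdot b$ that push down to $E_s(b)$, with the constraint $s\geq 2$ reflecting the fact that $\tau^0 b$ and $\tau^1 b$ are already detected by $\phi$-classes supported near $\Delta$.

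Second, for the additive basis I would cover $SP^2(X)=U\cup V$ with $U$ a tubular neighborhood of $\Delta$ retracting to $X$ and $V=SP^2(X)\setminus\Delta$. On $V$ the action is free, so $H^*(V;\mathbb{F}_2)$ is the $\sigma^*$-invariant part of $H^*(X^2\setminus\Delta;\mathbb{F}_2)$ and contributes the classes $\phi(b_i\otimes b_j)$ with $i<j$, while $H^*(U;\mathbb{F}_2)\cong H^*(X;\mathbb{F}_2)\otimes H^*(B\mathbb{Z}_2;\mathbb{F}_2)$ contributes the $E_s(b_i)$. A Mayer-Vietoris argument, organized by a CW skeletal filtration of $X$ realizing the basis $\{b_i\}$, then recovers exactly the proposed basis. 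For the multiplicative structure, relation (1) follows by applying $\pi^*$ to both sides and using injectivity of $\pi^*$ on the span of $\phi$-classes; both sides then reduce to the Künneth identity for
\[
(b_i\otimes b_j+b_j\otimes b_i)(b_u\otimes b_v+b_v\otimes b_u)\in H^*(X\times X;\mathbb{F}_2).
\]
Relation (2) reduces to a support statement: $E_s(b_i)$ is Thom-supported at $\Delta$, while $\phi(b_u\otimes b_v)$ restricts on $\Delta$ to $b_ub_v+b_vb_u=0$ by mod-$2$ graded commutativity, and two Thom-supported classes annihilate each other because of the normal geometry of $\Delta\subset SP^2(X)$.

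The main obstacle will be making the ``supported at $\Delta$'' picture precise. Because $SP^2(X)$ is singular along $\Delta$, the normal structure is not a smooth bundle but is governed by the Thom space of the symmetric square of the tangent-like bundle on $X$; identifying this Thom class and tracking its interaction with $\tau$-powers through the Mayer-Vietoris connecting homomorphism is the substantive calculation underlying Nakaoka's original work. Once this normal data is correctly identified and the $\tau$-action is understood, the additive basis and both ring relations drop out essentially by bookkeeping; the rest of the proof is formal.
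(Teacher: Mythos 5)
A preliminary remark: the paper does not prove this statement at all. It is quoted verbatim from \cite[Theorem 4.4]{jg}, which in turn distills Nakaoka's computation \cite{n} of $H^*(SP^2(X);\mathbb{F}_2)$, so there is no internal argument to compare yours against; your proposal has to be judged as an attempted reproof of Nakaoka's theorem. As such it has the right general shape (transfer classes $\phi$, diagonal--supported classes $E_s$, a decomposition of $SP^2(X)$ into the free part and a neighbourhood of the diagonal), but several of its steps fail as written.

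Concretely: (i) with $\mathbb{F}_2$-coefficients the cohomology of a free quotient is \emph{not} the $\sigma^*$-invariant part of the cohomology upstairs; it is computed by the Cartan--Leray spectral sequence and typically contains many classes coming from the higher cohomology of $\mathbb{Z}_2$ (already for $X=S^m$ one has $X^2\setminus\Delta\simeq S^m$ with a free involution and quotient $\simeq\mathbb{R}\mathrm{P}^m$), so your identification of $H^*(V;\mathbb{F}_2)$ is false precisely where the mod-2 subtlety lives. (ii) A neighbourhood $U$ of $\Delta$ in $SP^2(X)$ deformation retracts to $\Delta\cong X$, so $H^*(U;\mathbb{F}_2)\cong H^*(X;\mathbb{F}_2)$, not $H^*(X)\otimes H^*(B\mathbb{Z}_2)$; the tensor description belongs to the Borel construction, and you cannot ``push down'' the classes $\tau^s b$ along $X^2_{h\mathbb{Z}_2}\to SP^2(X)$ --- cohomology pulls back along that map, and the comparison map is far from surjective (take $X$ a point). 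In an honest Mayer--Vietoris the $E_s$-classes appear through the connecting homomorphism from the cohomology of the link of the diagonal, which is exactly the computation your sketch defers. (iii) Your proof of relation (1) by applying $\pi^*$ and invoking injectivity on $\phi$-classes breaks down because $\pi^*$ has a large kernel: $\pi^*\phi(b\otimes b)=0$ while $\phi(b\otimes b)=\sum_s E_s(Sq^{\deg b-s}b)$ is generally nonzero (relation (4)), and terms of this shape occur on the right-hand side of (1); the correct and easier route is the projection formula for the transfer, $\mathrm{tr}(x)\cdot z=\mathrm{tr}(x\cdot\pi^*z)$, which yields (1) immediately. (iv) Relation (2) is not a formal support statement: granting that $E_s$ restricts trivially to the free part (which you assert but do not prove) and that $\Delta^*\phi(b_u\otimes b_v)=b_ub_v+b_vb_u=0$, the pairing $H^*(SP^2X,V)\otimes H^*(SP^2X,U)\to H^*(SP^2X,U\cup V)=0$ does kill $E_s\cdot\phi$; but $E_s(b_i)\cdot E_t(b_j)$ involves two classes supported on the \emph{same} side, and products of classes supported near a subspace need not vanish (Thom classes have nonzero powers in general). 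Establishing $E_s E_t=0$ requires the actual multiplicative analysis of the pair $(SP^2X,V)$ and of the link of $\Delta$, i.e.\ the substance of Nakaoka's work, so the theorem is not recovered by the bookkeeping you describe.
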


The ring structure of $H^*(SP^2(X))$ is supplemented by more relations which are
listed in Theorem 4.4 of \cite{jg}. Of these we will need two of them:
\begin{itemize}
\item[(3)] $\phi(b_j \otimes b_i) = \phi(b_i \otimes b_j )$. 
\item[(4)] $\phi(b_i \otimes b_i) = \sum^{deg(b_i)}_{s=2} E_s(Sq^{deg(b_i)-s} b_i)$.
\end{itemize}

With this we are now ready to prove the last two Theorems presented in the Introduction.\\

\noindent{\bf Proof of Theorem~\ref{second_theorem}}

Let $e_m$ be the generator of $H^*(S^m)$. Thus 
$\phi(e_m\otimes 1)\phi(1\otimes e_m) = \phi(e_m\otimes e_m) = E_m(e_m)\neq 0$ in
$H^*(SP^2(S^m);\mathbb{F}_2))$ when $m>1$. 
Then by Theorem~\ref{lower_cohomology}
we have $n+1\leq \mathrm{TC}^\Sigma_n(S^m)$, and by the connectivity upper bound of 
Theorem 7.1 in \cite{g}, it follows that $\mathrm{TC}^\Sigma_n(S^m)\leq n+1$.
\hfill $\square$ \\

\noindent{\bf Proof of Theorem~\ref{third_theorem}}

The proof follows from Theorem~\ref{lower_cohomology} and the fact that the mod-2 cup-length of $SP^2(\mathbb{R}\mathrm{P}^m)$ is $2m$
when $m=2^e>1$, as was pointed out in \cite[Proposition 4.3]{g}.
\hfill $\square$ \\

\begin{remark}\normalfont 
The estimates obtained from Theorem~\ref{lower_cohomology} may not be strong enough
for $\mathrm{TC}^\beta_n$ as $n$ increases. For instance, the interested reader can check 
the following lower bounds for $\mathbb{R}\mathrm{P}^m$ with $m=2^e>1$ 
obtained from Theorem~\ref{nakaoka_short}:
\[ 4m \leq \mathrm{TC}^\beta_4(\mathbb{R}\mathrm{P}^m) \leq \mathrm{TC}^\Sigma_4 (\mathbb{R}\mathrm{P}^m) = 4m +1 \]
and
\[ 6m-2 \leq \mathrm{TC}^\beta_6(\mathbb{R}\mathrm{P}^m) \leq \mathrm{TC}^\Sigma_6 (\mathbb{R}\mathrm{P}^m) = 6m +1 \]
\end{remark}

\begin{remark}\label{last}\normalfont
The previous remark shows that $\mathrm{TC}^\beta$ may not be enough to describe
all the values of $\mathrm{TC}^\Sigma$ for a given space. At the same it suggests
that we could interpolate a chain of subgroups 
$1=G_0 \subset G_1 \subset \cdots \subset \Sigma_n$
and define the corresponding notions of $\mathrm{TC}^{G_i}_n$, where
$G_1=\langle \beta \rangle$. Note that $\mathrm{TC}^{G_0}_n = \mathrm{TC}_n$. 
This would yield a chain of inequalities
\[ \mathrm{TC}_n \leq \mathrm{TC}^{G_1}_n\leq \ldots\leq \mathrm{TC}^\Sigma_n. \]   
which should help capture the information between the ordinary and the
symmetrized topological complexity. Some of these intermediate complexities
may not be related to the motion planning problem at all. On the other hand,
each of these can be estimated from below by looking at the corresponding 
permutation product $G_iP^n(X):= X^n/G_i$. We will explore these ideas in
\cite{etg}. \end{remark}


\section{Planning on Spheres}

Calculating the values of the different versions of topological complexity becomes
potentially more applicable to practical problems when they are accompanied by explicit
motion planners. In this section we will describe bidirectional motion planners on the sphere 
$S^n\subset \mathbb{R}^{n+1}$ that realize the calculation of 
$\mathrm{TC}_2^\beta(S^n)=\mathrm{TC}^\Sigma_2(S^n)=3$ obtained in ~\cite{g}.
 
Let $\mathbf{n}=(0,\ldots,0,1)$, $\mathbf{s}=(0,\dots,0,-1)$ and $p_+,p_-:S^n \to \mathbb{R}^n$ be the stereographic projections with respect to $\mathbf{n}$ and $\mathbf{s}$ respectively. Let $D_+ = \{ x\in S^n|x_{n+1}>0\}$ and
$D_{-} = \{ x\in S^n|x_{n+1}<0\}$. 
Consider the following open sets of $S^n\times S^n$:
\[ U_+ = (S^n \setminus \{ \mathbf{n}\} )\times (S^n \setminus \{ \mathbf{n} \} ) \]
\[ U_- = (S^n \setminus \{ \mathbf{s}\} )\times (S^n \setminus \{ \mathbf{s} \} ) \]
\[ V = (D_+ \times D_-) \cup (D_-\times D_+) \]
Note that these three are open $\beta$-symmetric and cover $S^n\times S^n$. Moreover both $U_+$ and $U_-$ are contractible and hence there exists a bidirectional motion planners on each of them. For instance, on $U_+$ we can use the stereographic projection $p_+$ to create a bidirectional motion planner on $U_+$ (similarly on $U_-$ with $p_-$).
When $(x,y)$ is in $V$ we construct a path from $x$ to $y$ as follows: let $x\in S^n$ such that $x_{n+1}\neq 0$, and
consider
\[ \alpha_x(t) = \frac{(1-t)x + t(0,\ldots,0,\frac{x_{n+1}}{|x_{n+1}|})}{||(1-t)x + 
t(0,\ldots,0,\frac{x_{n+1}}{|x_{n+1}|}) ||} \]
and
\[ \omega_x(t) = (0,\ldots,0,\sin(\pi t),\frac{x_{n+1}}{|x_{n+1}|} \cos(\pi t)) \]

The path connecting $(x,y)\in V$ will be given by $H(x,y)(t) = [\alpha_x\cdot\omega_x\cdot\alpha_y](t)$.
This construction realizes the calculation of $\mathrm{TC}_2^\beta(S^n)=3$.

\end{document}